\newtheorem{theorem}{Theorem}[section]
\newtheorem{lemma}[theorem]{Lemma}
\newtheorem{corollary}[theorem]{Corollary}
\newtheorem{proposition}[theorem]{Proposition}
\theoremstyle{definition}
\newtheorem{definition}[theorem]{Definition}
\theoremstyle{remark}
\newtheorem{remark}[theorem]{Remark}
\numberwithin{equation}{section}
\keywords{inertial manifolds, Navier-Stokes equations, hyperviscosity, global attractors, distribution of lattice points}
\begin{document}
\author[Y. Guo]{Yanqiu Guo}
\address{Department of Mathematics \& Statistics \\
Florida International University\\
Miami, Florida 33199, USA}
\title[Inertial manifolds for the 2D hyperviscous NSE]{Inertial manifolds for the two-dimensional hyperviscous Navier-Stokes equations}
\date{January 25, 2024}
\maketitle

\begin{abstract}
This study establishes the existence of inertial manifolds for the hyperviscous Navier-Stokes equations (HNSE) on a 2D periodic domain:
\begin{equation*}
\partial_t u+ \nu(-\Delta) ^{\beta}u+(u\cdot \nabla )u+\nabla p=f, \;\; \text{on} \;\; \mathbb{T}^2,
\end{equation*}
with $\nabla \cdot u=0$, for any $\beta > \frac{17}{12} $. 
The exponent $\beta = \frac{3}{2}$ is identified as the ``critical" value for the inertial manifold problem in 2D HNSE, below which the spectral gap condition is not satisfied. A breakthrough in this work is that it extends the theory to ``supercritical" regimes where $\beta < \frac{3}{2}$. An important aspect of our argument involves a refined analysis on the sparse distribution of lattice points in annular regions.

\end{abstract}

\section{Introduction}

\subsection{The problem}

The existence of an inertial manifold (IM) for the Navier-Stokes equations (NSE) is a long-standing open problem. It asks whether the solutions of the NSE approach a finite-dimensional, invariant, and Lipschitz-continuous manifold in the phase space at an exponential rate as $t \rightarrow \infty$. This question is significant in both theory and practice because if the NSE has an IM, then, at large time, solving the NSE can be reduced to solving a system of a finite number of ODEs. However, the existence of IMs is still unknown for NSE in either two or three dimensions. Global well-posedness of solutions and the existence of global attractors are available for 2D NSE. Although the 2D theory for NSE is complete in many aspects, the current tools to establish IM are not applicable to 2D NSE. In 3D, even the global well-posedness is not known for NSE.

In this paper, we consider the IM problem for a regularized NSE on a 2D periodic domain:
\begin{equation} \label{HNSEb}
\partial_t u+ \nu(-\Delta) ^{\beta}u+(u\cdot \nabla )u+\nabla p=f, \;\; \text{on} \;\; \mathbb{T}^2 = [0,2\pi]^2,
\end{equation}
with the divergence-free condition $\nabla \cdot u=0$, where $\beta\geq 1$. If $\beta=1$, it is the original NSE. If $\beta>1$, $\nu(-\Delta) ^{\beta}u$ is a strengthened viscous term, 
and the system (\ref{HNSEb}) can be called the hyperviscous Navier-Stokes equations (HNSE). 
The hyperviscous term $\nu(-\Delta) ^{\beta}u$ provides stronger regularization than the original viscous term. 
If $u = \sum_{j\in \mathbb Z^2} \hat u_j e^{ij \cdot x}$, 
then $(-\Delta) ^{\beta}u = \sum_{j \in \mathbb Z^2} |j|^{2\beta} \hat u_j e^{i j \cdot x}$. The global well-posedness of the 2D HNSE (\ref{HNSEb}) is valid when $\beta\geq 1$.

The existence of IMs for the 2D HNSE (\ref{HNSEb}) was shown in \cite{Temam, FST} if $\beta>2$, and in \cite{GG} if $\beta \geq \frac{3}{2}$. 
We refer to $\beta=\frac{3}{2}$ as the ``critical" exponent for the IM problem of the 2D HNSE because the spectral gap condition is satisfied when $\beta\geq \frac{3}{2}$. 
Indeed, in the Hilbert space $L^2(\mathbb T^2)$, the Laplacian $-\Delta$ has eigenfunctions $e^{(j_1 x_1 + j_2 x_2) i}$, $j_1,j_2 \in \mathbb Z$, 
with eigenvalues $j_1^2 + j_2^2$. That is, the set of eigenvalues is the set of sums of two squares $\{j_1^2 + j_2^2\}$.
If we sort the eigenvalues of $-\Delta$ as $0\leq \lambda_1 \leq \lambda_2 \leq \cdots$,  repeated
according to their multiplicities, then
the spectral gap condition for the 2D HNSE (\ref{HNSEb}) demands that there exists $N\in \mathbb N$ such that the quantity
$\frac{\lambda_{N+1}^{\beta} - \lambda_N^{\beta}}{\lambda_{N+1}^{1/2} + \lambda_N^{1/2}}$ is sufficiently large. This is obvious true if $\beta>\frac{3}{2}$. If $\beta=\frac{3}{2}$, the spectral gap condition is still available thanks to the fact that there are arbitrarily large gaps between sums of two squares (see Erd\"os \cite{Erdos}, and Richards \cite{Richards}). The main contribution of this paper is to show the existence of IMs for the 2D HNSE (\ref{HNSEb}) in the ``supercritical" regime where $\beta < \frac{3}{2}$. Under such a scenario, the spectral gap condition is not available. Instead, we explore a certain type of sparse distribution of lattice points in annular regions. Also, the spatial averaging principle plays a crucial role in the proof. It was employed by Mallet-Paret and Sell \cite{Mallet-Sell} to show the existence of IMs for the 3D reaction-diffusion equation. While our focus is on the 2D HNSE, it is worth mentioning that the IM for the 3D HNSE has been obtained in \cite{GG} for $\beta \geq \frac{3}{2}$ by the spatial averaging principle.

\subsection{The literature}
The existence of IMs typically depends on the fulfillment of a spectral gap condition within dissipative PDEs. This condition is satisfied in several notable instances, such as the 2D reaction-diffusion equation, the 2D Cahn–Hilliard equation, and the 1D Kuramoto–Sivashinsky equation. Thus, these equations are known to possess IMs \cite{FST, FNST}. In some cases, the spectral gap condition holds when the nonlinearity in the equation does not involve derivatives, as seen in the reaction-diffusion equation. Alternatively, it can be fulfilled when there is substantial dissipation present in the PDEs, such as in the Cahn–Hilliard equation and the Kuramoto–Sivashinsky equation. Also, it is commonly needed that the physical dimension of the domain for these equations does not exceed two.

When the spectral gap condition is not met, it is sometimes possible to apply a specific transformation to eliminate derivatives in the nonlinearity, thereby reformulating the equation to satisfy the spectral gap condition. This approach is exemplified in the case of diffusive Burgers equations in one or two dimensions, where the Cole-Hopf transform is employed to derive the IMs \cite{Vuk2}. Also, a nonlinear and nonlocal transformation can be effectively employed on the Smoluchowski Equation on a sphere \cite{Vuk}. This transformation removes the gradient from the nonlinear term, leading to the existence of IMs. However, it is worth mentioning that the Kwak transformation applied to the NSE does not confirm the existence of an IM for the NSE. This is primarily due to the transformed linear operator lacking the self-adjoint property \cite{Kost-Zelik2}.

A different approach to obtaining IMs in the absence of the spectral gap condition is the spatial averaging method. This technique leverages a number-theoretical result concerning the sparse distribution of lattice points within spherical shells in $\mathbb R^3$. Mallet-Paret and Sell pioneered the application of this method to show the existence of IMs for 3D reaction-diffusion equations \cite{Mallet-Sell}. Subsequently, it has been adapted to acquire IMs for other models, including the 3D Cahn-Hilliard equation \cite{Kost-Zelik} and 3D regularized NSEs \cite{Kostianko, GG, KLSZ}. Using a combination of spatial and time averaging mechanisms, an IM has been obtained for the 3D complex Ginzburg-Landau equations \cite{KSZ}. Furthermore, the study in \cite{WZM} focused on the HNSE with time-varying forcing, obtaining a locally forward invariant, pullback exponentially attracting, and finite-dimensional Lipschitz manifold.

 In this paper, we employ and further advance the spatial averaging method to investigate the two-dimensional HNSE (\ref{HNSEb}) with a ``supercritical" exponent on the Laplacian.

\vspace{0.1 in}

\section{The main result}

Before presenting our results, it is essential to introduce the concept of inertial manifolds. This notion was initially proposed by Foias, Sell, and Temam in \cite{FST}. 
Below we state a definition of IM that is stronger than the original version in \cite{FST}.

Consider $H$ as a Hilbert space endowed with an orthonormal basis $\{e_j\}_{j=1}^{\infty}$. Let $P_N H$ denote the subspace of $H$ spanned by $\{e_j\}_{j=1}^{N}$, and $Q_N H$ denote the subspace spanned by $\{e_j\}_{j={N+1}}^{\infty}$.

\begin{definition}
\label{defman} Let $H$ be a Hilbert space with the norm $\|\cdot\|$. A subset $\mathcal{M}\subset H$
is called an \emph{inertial manifold} for a dynamical system in $H$
associated with the semigroup $S(t)$, provided the following conditions are
satisfied:

\begin{enumerate}
\item $\mathcal{M}$ is invariant, i.e., $S(t)\mathcal{M}= \mathcal{M}$, for all $t\geq 0$.

\item $\mathcal{M}$ is a finite-dimensional Lipschitz manifold, i.e. there
exists a Lipschitz continuous function $\Phi :P_{N}H\rightarrow Q_{N}H$ such
that the manifold $\mathcal M$ is the graph of $\Phi$, that is, 
\begin{equation*}
\mathcal{M} =\{u \in H: \; u = p+\Phi (p),\,p\in P_{N}H\}.
\end{equation*}

\item The exponential tracking property holds, namely, there exist constants 
$C,\alpha>0$ such that for every $u_0\in H$, there is a corresponding $v_0\in \mathcal{M}$ with
\begin{equation} \label{tracking}
\|S(t)u_0-S(t)v_0\|\leq Ce^{-\alpha t}\|u_0-v_0\|,\text{ for all }t\geq 0.
\end{equation}
\end{enumerate}
\end{definition}

\vspace{0.1 in}

For the analysis of the HNSE (\ref{HNSEb}), we define the phase space as 
\begin{align} \label{H-space}
\mathbb H=\{u\in (L^{2}(\mathbb{T}^{2}))^{2}:\int_{\mathbb{T}^{2}}u\,dx=0,\; \nabla \cdot u=0\}.  
\end{align}
The Sobolev space
$H^{s}(\mathbb T^2)=\left\{ u\in \mathbb H:\|u\|_{H^{s}}^{2}=\sum_{j\in \mathbb{Z}^{2}\backslash \{0\}} |j|^{2s}|\hat{u}_{j}|^{2}<\infty \right\}$, for any $s\in \mathbb R$.

We now state the main result of this manuscript.
\begin{theorem} \label{thm-main}
Consider the Hilbert space $\mathbb H$ defined in (\ref{H-space}).
Assume $\beta > \frac{17}{12}$. Let $f\in H^{\frac{1}{6}}(\mathbb T^2)$.
Then the ``prepared" equation (\ref{prepared}) for the hyperviscous Navier-Stokes equations (\ref{HNSEb}) has an inertial manifold in $\mathbb H$ in the
sense of Definition \ref{defman}.
\end{theorem}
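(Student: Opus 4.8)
The plan is to realize the prepared equation as an abstract semilinear evolution equation $\dot u + A u + F(u) = 0$ on $\mathbb H$, where $A = \nu(-\Delta)^\beta$ (composed with the Leray projection) is self-adjoint and positive with eigenvalues $\nu|j|^{2\beta}$, $j\in\mathbb Z^2\setminus\{0\}$, and to apply an inertial-manifold existence theorem of Mallet-Paret--Sell type based on the spatial averaging principle rather than on a global spectral gap. First I would carry out the \emph{preparation} step that produces (\ref{prepared}): since $\beta\ge 1$ guarantees global well-posedness, a bounded absorbing set, and a global attractor in sufficiently smooth norms (this is where $f\in H^{1/6}$ enters, to place the attractor in a high-enough Sobolev space), one truncates the nonlinearity $B(u)=\mathbb P[(u\cdot\nabla)u]$ outside a large ball so that the modified $F$ is globally bounded and globally Lipschitz in the relevant fractional-power norm and agrees with $B$ on a neighborhood of the attractor. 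An inertial manifold for the prepared equation then captures the long-time dynamics, and it suffices to build it as the graph of a Lipschitz map $\Phi\colon P_N\mathbb H\to Q_N\mathbb H$.

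With the abstract formulation in hand, I would reduce the construction of $\Phi$ (invariance, the Lipschitz bound, and exponential tracking (\ref{tracking})) to verifying, for infinitely many eigenvalue levels $\lambda_{N_k}\to\infty$, two properties: (a) a moderate spectral gap flanking a thin high-frequency band, and (b) the spatial averaging property, namely that the band-restricted linearization $R F'(u) R$ is close in operator norm to a scalar operator $a(u)\,I$, uniformly for $u$ in the absorbing set; here $R$ denotes the spectral projector of $-\Delta$ onto the band $\{j:\lambda\le|j|^2<\lambda+H\}$ and $F'(u)v=\mathbb P[(u\cdot\nabla)v+(v\cdot\nabla)u]$. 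A useful structural simplification is that every field in $\mathbb H$ has zero mean, so $\hat u_0=0$; a direct computation of the diagonal entries $\langle F'(u)e_k,e_k\rangle$ (using $\nabla\cdot u=0$) shows they vanish, whence $a(u)\equiv 0$ and property (b) becomes the genuine smallness estimate $\|R F'(u) R\|_{L(\mathbb H)}\le\delta$.

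The heart of the argument, and the main obstacle, is establishing (a) and (b) through a refined count of lattice points in annular regions. The off-diagonal entries satisfy $|\langle F'(u)e_l,e_k\rangle|\lesssim |k|\,|\hat u_{k-l}|$, so by a Schur estimate $\|R F'(u) R\|\lesssim \lambda^{1/2}\max_{k}\sum_{m\ne 0:\,k-m\in\mathrm{band}}|\hat u_m|$. The factor $\lambda^{1/2}$ is exactly the one-derivative loss of the transport and stretching terms that defeats the naive spectral gap when $\beta<\frac32$; the mechanism that rescues the estimate is that the constraint that $k$ and $k-m$ both lie in a thin annulus of radius $\sim\sqrt\lambda$ forces $m$ to be nearly perpendicular to $k$, so that only few, or only large, frequencies $m$ contribute, and the rapid decay of $\hat u_m$ on the attractor then makes the sum small. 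Quantifying this requires (i) selecting levels $\lambda$ admitting a surrounding shell free of sums of two squares, which supplies the flanking gap in (a), and (ii) bounding the number of pairs $k,l$ in the annulus with a prescribed difference $k-l=m$. Balancing the gain from this sparse alignment against the derivative loss $\lambda^{1/2}$ and the gap size $\sim\lambda^{\beta-1}H$ is precisely what produces the threshold $\beta>\frac{17}{12}$ and dictates the required regularity $f\in H^{1/6}$; the standard Gauss-circle and divisor bounds are too crude here, so a sharpened estimate on the simultaneous occurrence of spectral gaps and sparse annular alignment is the crux.

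Once (a) and (b) hold along a sequence $N_k\to\infty$, I would invoke the spatial-averaging inertial-manifold theorem to produce the Lipschitz graph $\Phi$ together with the invariance and exponential tracking properties, thereby exhibiting the inertial manifold for the prepared equation in the sense of Definition \ref{defman}. The remaining work is to make the cone/graph-transform estimates quantitative and to confirm that the small parameter $\delta$ from (b) can be chosen below the gap-determined contraction rate, which is routine once the number-theoretic input is secured.
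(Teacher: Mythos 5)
Your overall architecture does match the paper's: truncate the nonlinearity outside an $H^{3+\epsilon}$ absorbing ball (this is where $f\in H^{1/6}$ enters), recast the prepared equation abstractly, observe that the spatial average of the band-restricted linearization vanishes because all fields in $\mathbb H$ have zero mean, and reduce everything to a sparse-distribution property of lattice points in thin annuli. But there are two genuine gaps. The first is that the number-theoretic statement you yourself call ``the crux'' is exactly what you never supply, and it is the paper's main new ingredient (Lemma \ref{thm2D}): for any $0<s<\frac16$ there exist arbitrarily large $\lambda$ and $k\geq C\lambda^{s}$ such that any two lattice points in the annulus $\lambda-k\leq |x|^{2}\leq\lambda+k$ are at distance $>\lambda^{s/2}$. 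The paper proves this by partitioning a macro-annulus into $J\sim\mu^{1/2}$ shells of width $\mu^{s}$, noting that any close pair in a shell lies in a strip $\{|x\cdot j|<\mu^{s}\}$ with $0<|j|\leq\mu^{s/2}$, bounding the lattice-point count of the union of such strips intersected with the macro-annulus by $C\mu^{3s}$, and concluding by pigeonhole that some shell contains no close pair since $\mu^{3s}\ll\mu^{1/2}$ for $s<\frac16$. Without this counting argument your plan is empty at its core. Note also that the paper needs no shell ``free of sums of two squares'' as in your item (i): gap-free shells of polynomial length are not known to exist for sums of two squares (Erd\H{o}s/Richards give only logarithmic gaps), and the argument only requires $1\leq\lambda_{N+1}-\lambda_{N}\leq k/2$ together with pairwise separation of lattice points inside the band.

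The second gap is that your spatial averaging property (b), namely $\|RF'(u)R\|_{\mathcal L(\mathbb H,\mathbb H)}\leq\delta$ with $\delta$ an absolute small constant and $F'(u)v=P_{\sigma}[(u\cdot\nabla)v+(v\cdot\nabla)u]$, is both unachievable and not what is needed. Your own Schur estimate shows $\|RF'(u)R\|\lesssim\lambda^{1/2}\sum_{|m|>r}|\hat u_{m}|\lesssim\lambda^{1/2}r^{-1}=\lambda^{(1-s)/2}$ with $r=\lambda^{s/2}$, $s<\frac16$, which grows with $\lambda$; no choice of band beats the full derivative loss down to a constant. The paper's resolution is structural: it writes the nonlinearity as $A^{1/2}\mathscr F(u)$ with $\mathscr F(u)=A^{-1/2}B(W(u),W(u))$ globally Lipschitz on $\mathbb H$, and proves a bespoke strong-cone lemma (Lemma \ref{thm1}) whose averaging hypothesis (\ref{averaging}) demands only $\|\mathcal I_{k,N}\mathscr F'(u)\mathcal I_{k,N}\|\leq\frac{1}{16}\lambda_{N}^{-\frac12(3-2\beta)}$ --- equivalently, for the derivative-included operator, smallness relative to $\lambda_{N}^{\beta-1}$ rather than an absolute $\delta$. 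Proposition \ref{prop1} achieves the rate $\lambda_{N}^{-s/2}$, so the scheme closes precisely when $s>3-2\beta$, and the window $s\in(3-2\beta,\frac16)$ is nonempty exactly when $\beta>\frac{17}{12}$; this is the derivation of the threshold that you leave to an unspecified ``balancing.'' Because of the derivative loss and these $\beta$-dependent rates, there is no off-the-shelf Mallet-Paret--Sell theorem to invoke; the cone-property lemma has to be proved from scratch (it occupies the bulk of the paper), so describing that step as ``routine'' conceals the second of the paper's two main technical contributions.
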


\begin{remark} \label{remk-pre}
In the theory of inertial manifolds, it is customary to modify the original PDE outside its absorbing ball, focusing instead on the IMs of these adjusted equations. Such adaptations are called ``prepared" equations. Notably, within the absorbing ball, the solutions of both the original and the ``prepared" equation coincide, resulting in identical long-term behaviors.
The construction of the ``prepared" equation is detailed in Subsection \ref{subsec-modi}. As specified in Theorem \ref{thm-main}, our focus is on the ``prepared" version (\ref{prepared}) corresponding to the HNSE (\ref{HNSEb}), which possesses an IM.
\end{remark}

\begin{remark}
If $\beta$ is above its critical value, i.e., $\beta \geq \frac{3}{2}$, the existence of IMs for the HNSE (\ref{HNSEb}) has been proved in \cite{GG}
by Gal and Guo. But, the present paper is devoted to handling the supercritical case $\frac{17}{12} < \beta < \frac{3}{2}$.
\end{remark}

\vspace{0.1 in}

\section{The proof of the main result}
\subsection{An abstract model}
We consider the following abstract model in a Hilbert space $H$:
\begin{equation}   \label{a1}
\partial_ t u+A^{\beta}u+A^{1/2}F(u)=f,    \;\;   \text{for} \;\;     \frac{17}{12} < \beta < \frac{3}{2},
\end{equation}
where $f\in H$.
We assume $\beta$ in a ``supercritical" range $\frac{17}{12} < \beta < \frac{3}{2}$ for the above abstract model where the spectral gap condition is not available. Here, the nonlinear function $F: H \rightarrow H$ is globally Lipschitz continuous with Lipschitz constant $L$.
The abstract model (\ref{a1}) is motivated by the HNSE (\ref{HNSEb}).

Here, $A:\mathcal{D}(A)\rightarrow H$ is a linear, symmetric, and positive operator
with a compact inverse. The operator $A$ possesses a complete orthonormal
system of eigenvectors $\{e_{j}\}_{j=1}^{\infty }$ in $H$, corresponding to eigenvalues $\lambda _{j}$, which satisfy $\lambda _{j}\rightarrow \infty $ as 
$j\rightarrow \infty $ and $Ae_{j}=\lambda _{j}e_{j}$ with $0<\lambda _{1}\leq \lambda _{2}\leq \lambda
_{3}\leq \cdots$.
 For a given $N\in \mathbb{N}$, we define projection operators $P_N$ and $Q_N$ on the lower and higher modes, respectively, as
\begin{equation}
P_{N} u =\sum_{j=1}^{N}u_{j}e_{j},\;\;\;Q_{N}u=\sum_{j=N+1}^{\infty}u_{j}e_{j},  \label{proj}
\end{equation}
where $u_{j}=(u,e_{j})$.

In $H$, the norm is denoted by $\|\cdot \|$ and the inner product by $\left( \cdot ,\cdot \right)$. Additionally, the projection operators on the low, high, and intermediate Fourier modes are defined as follows:
\begin{equation*}
\mathcal P_{k,N}u =\sum_{\lambda _{j}<\lambda
_{N}-k}u_{j}e_{j},\;\;\;\mathcal Q_{k,N}u =\sum_{\lambda _{j}>\lambda
_{N}+k}u_{j}e_{j},\;\;\;\mathcal I_{k,N}u =\sum_{\lambda _{N}-k\leq \lambda _{j}\leq
\lambda _{N}+k}u_{j}e_{j},
\end{equation*}%
where $u_{j}=(u,e_{j})$, for some $k<\lambda _{N}$.

The following elementary inequality is useful:
\begin{align} \label{c0}
a^{\beta} - b^{\beta} \geq \frac{1}{2}(a-b) (a^{\beta-1} + b^{\beta-1}),
\end{align}
for all real numbers $a\geq b\geq 0$, and for $\beta\geq 1$.

We consider solutions $u$ of equation (\ref{a1}) belonging to the space $$u\in L^2(0,T; \mathcal D(A^{\beta/2})) \cap C([0,T];H) \;\;\text{with}\;\; \partial_t u \in L^2(0,T; \mathcal D(A^{\beta/2})').$$

\begin{lemma}

\label{thm1} 
Assume $\frac{17}{12} < \beta < \frac{3}{2}$ and  $s \in (3-2\beta, \,\frac{1}{6})$.
Let $u_{1}, u_{2} $ be two solutions of equation (\ref{a1}) for all $t\geq 0$. 
Define $v = u_{1} - u_{2}$ and let $V(t) = \|q\|^2 - \|p\|^2$, where $p = P_{N}v$ and $q = Q_{N}v$.
Assume $F:H\rightarrow H$ is Gateaux differentiable and satisfies $\|F^{\prime }(u)\|_{\mathcal{L}(H,H)}\leq L$ for all $u\in H$. Further, assume the existence of arbitrarily large $\lambda_N$ and $k \geq \lambda_N^s$, with $1\leq \lambda _{N+1}-\lambda_{N}  \leq \frac{k}{2}$, such that a spatial averaging condition is fulfilled: 
\begin{equation}
\|\mathcal I_{k,N}F^{\prime }(u)\mathcal I_{k,N} \|_{\mathcal L(H,H)} \leq    \frac{1}{16} \lambda_N^{ -   \frac{1}{2}(3-2\beta)},\text{\ \ for all\ \ }u\in H.
\label{averaging}
\end{equation}
Under these conditions, the following strong cone property holds:
\begin{equation}
\frac{d}{dt} V(t)+\left( \lambda _{N+1}^{\beta}+\lambda _{N}^{\beta}\right)
V(t)\leq -\frac{\lambda^{\beta-1} _{N}}{8}\|v(t)\|^{2}, \;\;\;\text{for all}  \;\;t\geq 0.
  \label{stcone}
\end{equation}
\end{lemma}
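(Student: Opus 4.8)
The plan is to linearize the difference equation and run a cone-type energy estimate in which the \emph{entire} linear part supplies a baseline coercivity of order $\lambda_N^{\beta-1}$ on \emph{every} mode, after which the nonlinearity is split into a genuinely intermediate interaction, handled by the spatial averaging bound (\ref{averaging}), and all remaining interactions, handled by the surplus dissipation available on the far modes. Concretely, subtracting the two copies of (\ref{a1}) and writing $F(u_1)-F(u_2)=B(t)v$ with $B(t)=\int_0^1 F'(\tau u_1+(1-\tau)u_2)\,d\tau$ gives
\[
\partial_t v + A^{\beta} v + A^{1/2}B(t)v = 0 .
\]
Two features of $B(t)$ are recorded at once: $\|B(t)\|_{\mathcal L(H,H)}\le L$, and, by averaging (\ref{averaging}) under the integral sign, $\|\mathcal I_{k,N}B(t)\mathcal I_{k,N}\|_{\mathcal L(H,H)}\le \frac1{16}\lambda_N^{-\frac12(3-2\beta)}$. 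Differentiating $V=\|q\|^2-\|p\|^2$ (legitimate for the stated class of solutions) and computing $\frac{d}{dt}\|q\|^2$ and $\frac{d}{dt}\|p\|^2$ yields
\[
\frac{d}{dt}V + (\lambda_{N+1}^{\beta}+\lambda_N^{\beta})V = \sum_{j} c_j |v_j|^2 - 2\bigl(A^{1/2}B(t)v,\,q-p\bigr),
\]
where $c_j = 2\lambda_j^{\beta}-\lambda_{N+1}^{\beta}-\lambda_N^{\beta}$ for $j\le N$, $\ c_j=\lambda_{N+1}^{\beta}+\lambda_N^{\beta}-2\lambda_j^{\beta}$ for $j>N$, and $v_j=(v,e_j)$.

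The decisive elementary step is to check, using (\ref{c0}) and $\lambda_{N+1}-\lambda_N\ge 1$, that $c_j \le -\frac12\lambda_N^{\beta-1}$ for \emph{every} $j$. Indeed each $c_j$ is the negative of the sum of the two gaps from $\lambda_j^\beta$ to $\lambda_N^\beta$ and to $\lambda_{N+1}^\beta$; whichever of $\lambda_N,\lambda_{N+1}$ lies on the far side of $\lambda_j$ already contributes a gap of at least $\lambda_{N+1}^\beta-\lambda_N^\beta\ge \frac12(\lambda_{N+1}-\lambda_N)\lambda_N^{\beta-1}\ge\frac12\lambda_N^{\beta-1}$. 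Hence $\sum_j c_j|v_j|^2\le -\frac12\lambda_N^{\beta-1}\|v\|^2$, a baseline dissipation of exactly the target order. Moreover, for the far modes $j\in\mathcal P_{k,N}\cup\mathcal Q_{k,N}$ the same inequality (\ref{c0}) together with $|\lambda_j-\lambda_N|>k\ge\lambda_N^{s}$ gives the much larger bound $|c_j|\ge\frac12 k\,\lambda_N^{\beta-1}$, a surplus I will spend on the nonlinearity.

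I would then decompose the bilinear term $-2(A^{1/2}B(t)v,\,q-p)$ according to the band ($\mathcal P_{k,N}$, $\mathcal I_{k,N}$, or $\mathcal Q_{k,N}$) of the input $v$ and of the output $q-p$. The single fully intermediate piece $-2\bigl(A^{1/2}\mathcal I_{k,N}B(t)\mathcal I_{k,N}v,\,\mathcal I_{k,N}(q-p)\bigr)$ is estimated by hand: on $\mathcal I_{k,N}$ the operator $A^{1/2}$ costs at most $(\lambda_N+k)^{1/2}\le\sqrt2\,\lambda_N^{1/2}$ (recall $k<\lambda_N$), and the averaging bound supplies $\frac1{16}\lambda_N^{-\frac12(3-2\beta)}$; since $\frac12-\frac12(3-2\beta)=\beta-1$, this piece is at most $\frac{\sqrt2}{8}\lambda_N^{\beta-1}\|v\|^2$, of the same order as the baseline but with a constant small enough to leave room. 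Every one of the remaining pieces carries at least one far-band factor; bounding $A^{1/2}$ by the frequency and $B(t)$ by $L$, a mode-wise Young's inequality absorbs the square of the far-band component into its surplus dissipation $|c_j|\ge\frac12 k\,\lambda_N^{\beta-1}$ and leaves a remainder of size $\lesssim L^2\lambda_N^{2-\beta-s}\|v\|^2$.

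Collecting the contributions, the right-hand side is bounded by $\bigl(-\frac12+\frac{\sqrt2}{8}\bigr)\lambda_N^{\beta-1}\|v\|^2 + C L^2\lambda_N^{2-\beta-s}\|v\|^2$. Here the hypothesis $s>3-2\beta$ enters decisively, as it is equivalent to $2-\beta-s<\beta-1$, which makes the remainder of strictly lower order; since arbitrarily large $\lambda_N$ are available, it is eventually $\le\frac18\lambda_N^{\beta-1}\|v\|^2$, and as $-\frac12+\frac{\sqrt2}{8}+\frac18<-\frac18$ this gives exactly (\ref{stcone}). I expect the main obstacle to be precisely this nonlinear estimate: the factor $A^{1/2}$ models the derivative loss of the Navier--Stokes nonlinearity, and the whole argument rests on the two calibrated exponents — the averaging exponent $-\frac12(3-2\beta)$, which tunes the intermediate interaction to order exactly $\lambda_N^{\beta-1}$, and the gap exponent $s>3-2\beta$, which renders the far interactions negligible — together with the elementary observation that every coefficient $c_j$ is already coercive of order $\lambda_N^{\beta-1}$.
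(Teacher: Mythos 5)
Your proposal is correct in substance and follows the same overall architecture as the paper's proof: the same cone functional, the same three-band decomposition via $\mathcal P_{k,N},\mathcal I_{k,N},\mathcal Q_{k,N}$, the spatial averaging hypothesis (\ref{averaging}) reserved exactly for the intermediate--intermediate interaction (with the same exponent bookkeeping $\tfrac12-\tfrac12(3-2\beta)=\beta-1$), and the same final balancing $2-\beta-s<\beta-1\iff s>3-2\beta$. Where you genuinely differ is the handling of the dissipation: you diagonalize the linear part into coefficients $c_j$ and absorb nonlinear contributions mode by mode, whereas the paper works with the quadratic forms $\alpha\|p\|^2-\|A^{\beta/2}p\|^2$ and $\|A^{\beta/2}q\|^2-\alpha\|q\|^2$ and, crucially, introduces an auxiliary exponent $\gamma\in(1-s,\,2\beta-2)$ to peel off a gradient term $\frac{2}{\lambda_N^{\gamma+1-\beta}}\|\mathcal Q_{k,N}A^{1/2}v\|^2$ from the high-mode surplus (see (\ref{b1})--(\ref{b1'}) and (\ref{b5})), against which the dangerous contributions $(F'(u)v,\mathcal Q_{k,N}A^{1/2}v)$ and part of the averaging term are later absorbed ((\ref{a15}), (\ref{a16}), (\ref{a17})). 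Your mode-wise absorption dispenses with $\gamma$ entirely and, written out, gives the same remainder order $L^2\lambda_N^{2-\beta-s}\|v\|^2$; it is arguably a cleaner packaging of the same spectral fact.

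One caveat, at precisely the step that makes the supercritical case delicate. For the pieces whose output lies in $\mathcal Q_{k,N}$, i.e.\ terms of the form $(w,\mathcal Q_{k,N}A^{1/2}v)$ with $w=B(t)v_{\mathrm{in}}$, the uniform surplus you quote, $|c_j|\ge\tfrac12 k\lambda_N^{\beta-1}$, is \emph{not} by itself sufficient: on $\mathcal Q_{k,N}$ the factor $A^{1/2}$ carries unbounded weights $\lambda_j^{1/2}$, so $\|\mathcal Q_{k,N}A^{1/2}v\|^2$ is not controlled by any multiple of $\|\mathcal Q_{k,N}v\|^2$, and an absorption calibrated only to the uniform bound fails for large $\lambda_j$. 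What saves your mode-wise Young inequality is the superlinear growth of the surplus in $\lambda_j$: for $j$ with $\lambda_j\ge\lambda_N+k$ one has $|c_j|=2(\lambda_j^\beta-\alpha)\gtrsim k\lambda_N^{\beta-2}\lambda_j$, which follows from (\ref{c0}), the hypothesis $\lambda_{N+1}-\lambda_N\le\tfrac k2$, and a one-line monotonicity check of $x\mapsto x^\beta-\delta x$ using $\beta>1$ and $k<\lambda_N$. This yields the per-mode remainder $\frac{2\lambda_j}{|c_j|}|w_j|^2\lesssim \frac{\lambda_N^{2-\beta}}{k}|w_j|^2$ and hence, after summation, the claimed $L^2\lambda_N^{2-\beta-s}\|v\|^2$. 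That growth-in-$\lambda_j$ is exactly what the paper's $\gamma$-device encodes; make it explicit and your argument closes.
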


\begin{proof}
Let us consider $u_{1}, u_{2}$ as two solutions of (\ref{a1}) and define $v=u_{1}-u_{2}$. Subsequently, we obtain the following equation:
\begin{equation}
\partial_t v+A^{\beta}v+A^{1/2}[F(u_{1})-F(u_{2})]=0,  \label{a2'}
\end{equation}
with $\frac{17}{12} < \beta < \frac{3}{2}$.

Define $p=P_{N}v$ and $q=Q_{N}v$. Taking the duality pairing of (\ref{a2'}) with $p $ and $q$ respectively, we derive
\begin{equation}
\begin{cases}
\frac{1}{2}\frac{d}{dt}\|p\|^{2} + \|A^{\beta/2}p\|^{2}+(F(u_{1})-F(u_{2}),A^{1/2}p)=0,
\\ 
\frac{1}{2}\frac{d}{dt} \|q\|^{2} + \|A^{\beta/2}q\|^{2}+(F(u_{1})-F(u_{2}),A^{1/2}q)=0.
\end{cases}
\label{a3}
\end{equation}

Let us denote $V(t)=\|q\|^2-\|p\|^2$. By subtracting the two equations in (\ref{a3}), we arrive at
\begin{equation}
\frac{d}{dt} V(t)=-2(\|A^{\beta/2}q\|^{2}-\|A^{\beta/2}p\|^{2})+2(F(u_{1})-F(u_{2}),A^{1/2}p-A^{1/2}q).
\label{a7}
\end{equation}

Thanks to the fundamental theorem of calculus for the Gateaux derivative, it holds that $F(u_{1})-F(u_{2})=\int_{0}^{1}F^{\prime }(su_{1}+(1-s)u_{2})vds$ where $v=u_1 - u_2$. Therefore, by setting $\alpha=\frac{\lambda _{N+1}^{\beta}+\lambda _{N}^{\beta}
}{2}$, we obtain from (\ref{a7}) that
\begin{align}   \label{a9}
& \frac{d}{dt} V(t)+2\alpha V(t)=\left[ \alpha
V(t)-(\|A^{\beta/2}q\|^{2}-\|A^{\beta/2}p\|^{2})\right] -(\|A^{\beta/2}q\|^{2}-\alpha \|q\|^{2})  \notag\\
& -(\alpha \|p\|^{2}-\|A^{\beta/2}p\|^{2})+2\int_{0}^{1}\left( F^{\prime
}(su_{1}+(1-s)u_{2})v,A^{1/2}p-A^{1/2}q\right) ds. 
\end{align}

Next, we will estimate each term on the right-hand side of (\ref{a9}).

Recognizing that $\|v\|^{2}=\|p\|^{2}+\|q\|^{2}$, along with $\|A^{\beta/2}q \| ^{2} \geq \lambda _{N+1}^{\beta} \|q\|^{2}$ and $\|A^{\beta/2}p\|^{2}\leq \lambda _{N}^{\beta}\|p\|^{2}$, we infer the following:
\begin{align}  \label{a8'}
\alpha V(t)-(\|A^{\beta/2}q\|^{2}-\|A^{\beta/2}p\|^{2})  \leq -\frac{\lambda _{N+1}^{\beta}-\lambda _{N}^{\beta}}{2}\|v\|^{2}.  
\end{align}

By employing inequality (\ref{c0}) and the assumption $\lambda _{N+1}-\lambda _{N}\geq 1$, we get
\begin{equation} \label{a101}
\frac{\lambda _{N+1}^{\beta}-\lambda _{N}^{\beta}}{2}
\geq    \frac{1}{4}(\lambda_{N+1}-\lambda _{N})(\lambda _{N+1}^{\beta-1}+\lambda _{N}^{\beta-1})
\geq \frac{1}{2} \lambda _{N}^{\beta-1}.  
\end{equation}

Applying (\ref{a101}) to (\ref{a8'}) leads to
\begin{align}  \label{a8}
\alpha V(t)-(\|A^{\beta/2}q\|^{2}-\|A^{\beta/2}p\|^{2}) \leq  -  \frac{1}{2} \lambda_N^{\beta-1} \|v\|^2.
\end{align}

By splitting $p=\mathcal P_{k,N}v+\mathcal I_{k,N}p$, and noting that $\|\mathcal P_{k,N}A^{\beta/2}v\|^2\leq \left( \lambda _{N}-k\right)
^{\beta} \| \mathcal P_{k,N}v\|^{2}$ and $\|\mathcal I_{k,N}A^{\beta/2}p\|^{2}\leq \lambda _{N}^{\beta}\|\mathcal I_{k,N}p\|^2$, we deduce
\begin{align}  \label{a10}
& \alpha \|p\|^2-\|A^{\beta/2}p\|^2  \geq \frac{\lambda _{N+1}^{\beta}  + \lambda _{N}^{\beta}  }{2} \left(
\|\mathcal P_{k,N}v\|^2+\|\mathcal I_{k,N}p\|^2\right) -\left(
\|\mathcal P_{k,N}A^{\beta/2}v\|^2+\|\mathcal I_{k,N}A^{\beta/2}p\|^2\right)  \notag \\
& \geq \Big[ \lambda _{N}^{\beta}-(\lambda_{N}-k)^{\beta}\Big] \|\mathcal P_{k,N}v\|^2+\frac{\lambda _{N+1}^{\beta}-\lambda_{N}^{\beta}}{2}\|\mathcal I_{k,N}p\|^2  \notag\\
&\geq  \frac{1}{2} k\lambda_{N}^{\beta-1}\|\mathcal P_{k,N}v\|^2 +  \frac{1}{2} \lambda _{N}^{\beta-1}\|\mathcal I_{k,N}p\|^2,
\end{align}
where we have used inequality (\ref{c0}).

Given $q=\mathcal Q_{k,N}v+\mathcal I_{k,N}q$, we write 
\begin{align*} 
\|A^{\beta/2}q\|^2-\alpha \|q\|^2  =
(\|\mathcal Q_{k,N}A^{\beta/2}v\|^2-\alpha\|\mathcal Q_{k,N}v\|^{2})
+(\|\mathcal I_{k,N}A^{\beta/2}q\|^2-\alpha \|\mathcal I_{k,N}q\|^2).
\end{align*}
Since $\|\mathcal I_{k,N}A^{\beta/2}q\|^2\geq \lambda_{N+1}^{\beta}\|\mathcal I_{k,N}q\|^2$, and applying (\ref{a101}), we get
\begin{align*}  
\|\mathcal I_{k,N}A^{\beta/2}q\|^2-\alpha \|\mathcal I_{k,N}q\|^2 \geq \frac{\lambda_{N+1}^{\beta} -
\lambda_N^{\beta} }{2} \|\mathcal I_{k,N}q\|^2 \geq \frac{1}{2}\lambda_N^{\beta-1} \|\mathcal I_{k,N}q\|^2.
\end{align*}
Combining the above two formulas, we have
\begin{align} \label{b4}
\|A^{\beta/2}q\|^2-\alpha \|q\|^2  \geq  (\|\mathcal Q_{k,N}A^{\beta/2}v\|^2-\alpha\|\mathcal Q_{k,N}v\|^{2}) +  \frac{1}{2}\lambda_N^{\beta-1} \|\mathcal I_{k,N}q\|^2.
\end{align}
Note that $\|\mathcal Q_{k,N}A^{\beta/2}v\|^2\geq (\lambda_{N}+k)^{\beta}\|\mathcal Q_{k,N}v\|^{2}$ and $\alpha \leq  \lambda _{N+1}^{\beta}$. Thus
\begin{align}
&\|\mathcal Q_{k,N}A^{\beta/2}v\|^2-\alpha \|\mathcal Q_{k,N}v\|^{2}\geq \left[ (\lambda_{N}+k)^{\beta}-\lambda _{N+1}^{\beta}\right] \|\mathcal Q_{k,N}v\|^{2}  \label{b6} \notag\\
& \geq \frac{1}{2}(\lambda _{N}+k-\lambda _{N+1})\left[ (\lambda
_{N}+k)^{\beta-1}+\lambda _{N+1}^{\beta-1}\right] \|\mathcal Q_{k,N}v\|^{2} \geq \frac{1}{2} k\lambda _{N+1}^{\beta-1}\|\mathcal Q_{k,N}v\|^{2},
\end{align}
because of the assumption that $\lambda _{N+1}-\lambda_{N}\leq  \frac{k}{2}$.

In addition to (\ref{b6}), it is necessary to establish another lower bound for $\|\mathcal Q_{k,N}A^{\beta/2}v\|^2-\alpha
\|\mathcal Q_{k,N}v\|^{2}$. 
Given the assumption that  $s>3-2\beta$, we can take a number $\gamma \in (1-s, \, 2\beta -2)$. Accordingly, we decompose the expression as follows:
\begin{align}
& \|\mathcal Q_{k,N}A^{\beta/2}v\|^2-\alpha \|\mathcal Q_{k,N}v\|^{2}  \label{b1} \notag \\
& \geq \frac{2}{\lambda _{N}^{\gamma}}\|\mathcal Q_{k,N}A^{\beta/2}v\|^2+
\Big[ \Big( 1-\frac{2}{\lambda _{N}^{\gamma}}\Big)
\|\mathcal Q_{k,N}A^{\beta/2}v\|^2-\alpha \|\mathcal Q_{k,N}v\|^{2}\Big].
\end{align}

Since $\alpha\leq \lambda _{N+1}^{\beta}  \leq (\lambda _{N} + \frac{1}{2}k)^{\beta}$, we derive
\begin{align}
& \Big( 1-\frac{2}{\lambda _{N}^{\gamma}} \Big)
\|\mathcal Q_{k,N}A^{\beta/2}v\|^2-\alpha \|\mathcal Q_{k,N}v\|^{2}  \label{b2} \notag \\
& \geq \Big[ \Big( 1-\frac{2}{\lambda _{N}^{\gamma}}\Big)
(\lambda _{N}+k)^{\beta}-(\lambda _{N} + \frac{1}{2}k)^{\beta}\Big]
\|\mathcal Q_{k,N}v\|^{2}.
\end{align}

Applying the binomial theorem and considering the assumption that $k\geq \lambda_N^s$, we find
 \begin{align}     \label{b2'}
\Big( 1-\frac{2}{\lambda _{N}^{\gamma}}\Big)
(\lambda _{N}+k)^{\beta} - (\lambda _{N} + \frac{1}{2}k)^{\beta} 
&\geq \frac{\beta}{2} \lambda_N^{\beta - 1} k  - 2 \lambda_N^{\beta - \gamma} + O(\lambda_N^{\beta - 2}k^2) \notag\\
&\geq  \frac{\beta}{2} \lambda_N^{\beta - 1+s}   - 2 \lambda_N^{\beta - \gamma} + O(\lambda_N^{\beta - 2 + 2s})
\geq 0,
\end{align} 
for sufficiently large $\lambda_N$. This follows from the condition $\gamma > 1-s$, which ensures that the leading term $\frac{\beta}{2} \lambda_N^{\beta - 1+s}$ increases at a faster rate than the secondary term $2 \lambda_N^{\beta - \gamma}$, when $\lambda_N \rightarrow \infty$. The term $O(\lambda_N^{\beta - 2 + 2s})$ represents lower-order contributions.

Combining (\ref{b1}), (\ref{b2}) and (\ref{b2'}), we conclude
\begin{align} \label{b1'}
\|\mathcal Q_{k,N}A^{\beta/2}v\|^2 -  \alpha \|\mathcal Q_{k,N}v\|^{2}    \geq \frac{2}{\lambda _{N}^{\gamma}}\|\mathcal Q_{k,N}A^{\beta/2}v\|^2
 \geq \frac{2}{\lambda _{N}^{\gamma + 1 - \beta}}\|\mathcal Q_{k,N}A^{1/2}v\|^2.
 \end{align}
Applying the estimates (\ref{b6}) and (\ref{b1'}) to inequality (\ref{b4}), we obtain
\begin{equation}
\|A^{\beta/2}q\|^2-\alpha \|q\|^2\geq \frac{1}{\lambda_{N}^{\gamma + 1 - \beta}}\|\mathcal Q_{k,N}A^{1/2}v\|^2+ \frac{1}{4} k \lambda_{N+1}^{\beta-1}\|\mathcal Q_{k,N}v\|^{2} + \frac{1}{2} \lambda _{N}^{\beta-1}\|\mathcal I_{k,N}q\|^2.
\label{b5}
\end{equation}

Merging (\ref{a8}), (\ref{a10}) and (\ref{b5}) yields
\begin{align}  \label{lincom}
&\left[ \alpha
V(t)-(\|A^{\beta/2}q\|^2-\|A^{\beta/2}p\|^2)\right] -(\|A^{\beta/2}q\|^2-\alpha \|q\|^2) - (\alpha \|p\|^2-\|A^{\beta/2}p\|^2) \notag\\
&\leq   - \frac{1}{2} \lambda_N^{\beta-1} \|v\|^2 - \frac{1}{2} k\lambda_{N}^{\beta-1}\|\mathcal P_{k,N}v\|^2- \frac{1}{2}\lambda _{N}^{\beta-1} (\|\mathcal I_{k,N}p\|^2  +   \|\mathcal I_{k,N}q\|^2)  \notag\\
&-\frac{1}{\lambda_{N}^{\gamma + 1 -\beta}}\|\mathcal Q_{k,N}A^{1/2}v\|^2 - \frac{1}{4} k \lambda_{N+1}^{\beta-1}\|\mathcal Q_{k,N}v\|^{2}.
\end{align}

To address the nonlinear term in (\ref{a9}), we analyze $(F^{\prime }(u)v,A^{1/2}p-A^{1/2}q)$ for any $u\in H$ as follows:
\begin{align}
& (F^{\prime }(u)v,A^{1/2}p-A^{1/2}q)  \label{nonlinear}  \notag\\
& =(\mathcal I_{k,N}F^{\prime }(u)\mathcal I_{k,N}v,A^{1/2}p-A^{1/2}q)+(\mathcal I_{k,N}F^{\prime
}(u)\mathcal P_{k,N}v,A^{1/2}p-A^{1/2}q)  \notag \\
& \;\;\;+(\mathcal I_{k,N}F^{\prime }(u)\mathcal Q_{k,N}v,A^{1/2}p-A^{1/2}q)+(\mathcal P_{k,N}F^{\prime
}(u)v,A^{1/2}p-A^{1/2}q)  \notag \\
& \;\;\;+(\mathcal Q_{k,N}F^{\prime }(u)v,A^{1/2}p-A^{1/2}q)  \notag \\
& =(\mathcal I_{k,N}F^{\prime }(u)\mathcal I_{k,N}v,A^{1/2}p-A^{1/2}q)+(F^{\prime
}(u)\mathcal P_{k,N}v,\mathcal I_{k,N}A^{1/2}p)  \notag \\
& \;\;\;-(F^{\prime }(u)\mathcal P_{k,N}v,\mathcal I_{k,N}A^{1/2}q)+(F^{\prime
}(u)\mathcal Q_{k,N}v,\mathcal I_{k,N}A^{1/2}p)-(F^{\prime }(u)\mathcal Q_{k,N}v,\mathcal I_{k,N}A^{1/2}q) 
\notag \\
& \;\;\;+(F^{\prime }(u)v,\mathcal P_{k,N}A^{1/2}v)-(F^{\prime }(u)v,\mathcal Q_{k,N}A^{1/2}v).
\end{align}%
We will estimate every term on the right-hand side of (\ref{nonlinear}).

Given that $\|F^{\prime }(u)\|_{\mathcal{L}(H,H)}\leq L$, we observe
\begin{align}
& |(F^{\prime }(u)\mathcal P_{k,N}v,\mathcal I_{k,N}A^{1/2}p)|+|(F^{\prime
}(u)\mathcal Q_{k,N}v,\mathcal I_{k,N}A^{1/2}p)|  \label{a12}  \notag\\
& \leq L(\|\mathcal P_{k,N}v\|+\|\mathcal Q_{k,N}v\|)\|\mathcal I_{k,N}A^{1/2}p\|  \notag \\
& \leq L\lambda _{N}^{1/2}(\|\mathcal P_{k,N}v\|+\|\mathcal Q_{k,N}v\|)   \|\mathcal I_{k,N}p\|  \notag \\
& \leq    L^{2}\lambda _{N}^{2-\beta}\left( \|\mathcal P_{k,N}v\|^2+\|\mathcal Q_{k,N}v\|^{2}\right) +\frac{1}{4}\lambda _{N}^{\beta-1}\|\mathcal I_{k,N}p\|^2.  
\end{align}%
In a similar manner,
\begin{align}
& |(F^{\prime }(u)\mathcal P_{k,N}v,\mathcal I_{k,N}A^{1/2}q)|+|(F^{\prime
}(u)\mathcal Q_{k,N}v,\mathcal I_{k,N}A^{1/2}q)|  \label{a13} \notag\\
& \leq L(\|\mathcal P_{k,N}v\|+\|\mathcal Q_{k,N}v\|) \|\mathcal I_{k,N}A^{1/2}q\|  \notag \\
& \leq L(\|\mathcal P_{k,N}v\|+\|\mathcal Q_{k,N}v\|)(\lambda _{N}+k)^{1/2}\|\mathcal I_{k,N}q\|  \notag \\
& \leq L(\|\mathcal P_{k,N}v\|+\|\mathcal Q_{k,N}v\|)(2\lambda _{N})^{1/2}  \|\mathcal I_{k,N}q\|  \notag \\
& \leq 4 L^{2}\lambda _{N}^{2-\beta}\left( \|\mathcal P_{k,N}v\|^2+\|\mathcal Q_{k,N}v\|^{2}\right) +    \frac{1}{8}    \lambda _{N}^{\beta-1}\|\mathcal I_{k,N}q\|^2.
\end{align}

Next, 
\begin{align}
& |(F^{\prime }(u)v,\mathcal P_{k,N}A^{1/2}v)|\leq L       \|v\|      \|\mathcal P_{k,N}A^{1/2}v\|
\leq
L \|v\|   \left( \lambda _{N}-k\right) ^{1/2}\|\mathcal P_{k,N}v\|  \label{a14} \notag\\
& \leq L  \|v\| \lambda _{N}^{1/2}\|\mathcal P_{k,N}v\|\leq 
8 L^2\lambda _{N}^{2-\beta}\|\mathcal P_{k,N}v\|^2  + \frac{1}{32} \lambda _{N}^{\beta-1}  \|v\|^2.
\end{align}
Also, 
\begin{align}  \label{a15}
|(F^{\prime }(u)v,\mathcal Q_{k,N}A^{1/2}v)|  \leq L       \|v\|    \|\mathcal Q_{k,N}A^{1/2}v\|   \leq   \frac{1}{32}     \lambda_{N}^{\beta-1}       \|v\|^2  +\frac{8 L^2}{ \lambda _{N}^{\beta-1}}\|\mathcal Q_{k,N}A^{1/2}v\|^2. 
\end{align}
Moreover, applying the spatial averaging condition (\ref{averaging}), we deduce
\begin{align}
& |(\mathcal I_{k,N}F^{\prime }(u)\mathcal I_{k,N}v,A^{1/2}p-A^{1/2}q)|
\leq  \frac{1}{16}   \lambda_N^{ -   \frac{1}{2}(3-2\beta)}
\|v\|   \|A^{1/2}p-A^{1/2}q\|     \notag\\
&\leq \frac{1}{16}    \lambda_N^{ -   \frac{1}{2}(3-2\beta)} \|v\|  \|A^{1/2}p\|+ \frac{1}{16}   \lambda_N^{ -   \frac{1}{2}(3-2\beta)} \|v\|  \|A^{1/2}q\| \label{a16} \notag \\
& \leq \frac{1}{16}   \lambda_N^{ -   \frac{1}{2}(3-2\beta)} \lambda _{N}^{1/2}\|v\|      \|p\| + \frac{1}{16}  \lambda_N^{ -   \frac{1}{2}(3-2\beta)}  \|v\|  \|\mathcal Q_{k,N}A^{1/2}v\| +  \frac{1}{16}  \lambda_N^{ -   \frac{1}{2}(3-2\beta)} \|v\|        \|\mathcal I_{k,N}A^{1/2}q\|  \notag \\
& \leq \frac{1}{16}\lambda _{N}^{\beta-1}\|v\|^2 + \frac{1}{16} \lambda_N^{ -   \frac{1}{2}(3-2\beta)} \|v\|  \|\mathcal Q_{k,N}A^{1/2}v\|+ \frac{1}{16} \lambda_N^{ -   \frac{1}{2}(3-2\beta)}
(2\lambda _{N})^{1/2}\|v\|\|\mathcal I_{k,N}q\|  \notag \\
& \leq \frac{1}{16}\lambda _{N}^{\beta-1}\|v\|^2 + \frac{1}{512} \lambda_{N}^{3\beta - 4}\|v\|^2+  \frac{2}{\lambda_N^{\beta-1}} \|\mathcal Q_{k,N}A^{1/2}v\|^2
+ \frac{1}{64}\lambda _{N}^{\beta-1}\|v\|^2 +   \frac{1}{8}\lambda _{N}^{\beta-1}\|\mathcal I_{k,N}q\|^2   \notag\\
&\leq  \frac{1}{8}\lambda_{N}^{\beta-1}\|v\|^2+  \frac{2}{\lambda_N^{\beta-1}} \|\mathcal Q_{k,N}A^{1/2}v\|^2  + \frac{1}{8}\lambda _{N}^{\beta-1}\|\mathcal I_{k,N}q\|^2,
\end{align}
for sufficiently large $\lambda_N$, where we have used the assumption $\beta<3/2$, leading to $\beta-1 > 3\beta -4$.

Substituting (\ref{a12})-(\ref{a16}) into (\ref{nonlinear}) yields 
\begin{align} \label{non}
&|(F^{\prime }(u)v, A^{1/2}p-A^{1/2}q)| \leq   
13  L^{2}\lambda _{N}^{2-\beta} \|\mathcal P_{k,N}v\|^2+    5 L^{2}\lambda _{N}^{2-\beta}   \|\mathcal Q_{k,N}v\|^{2}
  \notag\\
& +\frac{1}{4}\lambda _{N}^{\beta-1} (\|\mathcal I_{k,N}p\|^2  +  \|\mathcal I_{k,N}q\|^2)    +  \frac{3}{16}  \lambda_{N}^{\beta-1}\|v\|^2+  \frac{8L^2+2}{\lambda_N^{\beta-1}} \|\mathcal Q_{k,N}A^{1/2}v\|^2  ,
\end{align}%
for any $u\in H$.

Owing to (\ref{a9}), (\ref{lincom}) and (\ref{non}), we derive
\begin{align}  \label{a17}
&\frac{d}{dt} V(t)+2\alpha V(t) 
\leq   -\frac{ \lambda _{N}^{\beta-1}}{8}\|v\|^2  - ( \frac{1}{2} k\lambda_{N}^{\beta-1} -   26 L^{2}\lambda _{N}^{2-\beta}) \|\mathcal P_{k,N}v\|^2   \notag\\
& - (     \frac{1}{4} k \lambda_{N+1}^{\beta-1}   -     10  L^{2}\lambda _{N}^{2-\beta})      \|\mathcal Q_{k,N}v\|^{2}
-   \Big( \frac{1}{\lambda_{N}^{\gamma + 1 -\beta}}  -   \frac{16L^2+4}{\lambda_N^{\beta-1}}  \Big)  \|\mathcal Q_{k,N}A^{1/2}v\|^2.
\end{align}
Since $k\geq \lambda_N^s$ with $s>3-2\beta$, 
we have $k\lambda_{N}^{\beta-1} \geq \lambda_N^{s+\beta -1}$ where $s+\beta-1>2-\beta$. Therefore, $\frac{1}{2} k \lambda_{N}^{\beta-1} \geq  \frac{1}{2}   \lambda_N^{s+\beta -1} \geq   26  L^{2}\lambda _{N}^{2-\beta}$
for sufficiently large $\lambda_N$. Additionally, given $\gamma < 2\beta -2$, it follows that $\gamma + 1 -\beta < \beta -1$, implying that 
$\frac{1}{\lambda_{N}^{\gamma + 1 -\beta}} >  \frac{16L^2+4}{\lambda_N^{\beta-1}}$ for sufficiently large $\lambda_N$. 
Consequently, from (\ref{a17}), we can conclude that
\begin{equation}  \label{a18}
\frac{d}{dt} V(t)+2\alpha V(t)\leq -\frac{ \lambda _{N}^{\beta-1}}{8}\|v\|^2,  \;\; \text{for all} \;\; t\geq 0,
\end{equation}
for sufficiently large $\lambda _{N}$.
\end{proof}

\vspace{0.1 in}

\begin{remark}
The strong cone property, as expressed in (\ref{a18}), has been recognized as a fundamental condition for the existence of IMs. 
Generally, the \emph{strong cone property} is formulated as follows:
\begin{align} \label{strong-cone}
\frac{d}{dt} V(t)+ \alpha V(t)\leq - \mu \|v\|^2,  \;\; \text{for all} \;\;  t\geq 0,
\end{align}
where $\alpha$ and $\mu$ are positive constants. This concept was originally introduced in \cite{Zelik14}.
Recall that $V(t)=\|q\|^2-\|p\|^2$, where $p=P_{N}v$ and $q=Q_{N}v$. The function $v=u_1-u_2$ is the difference between two solutions $u_1,u_2$ of the abstract equation (\ref{a1}).
For further details and proofs regarding how the strong cone property, as stated in (\ref{strong-cone}), combined with the boundedness condition $\|F(u)\|\leq C$ for all $u\in H$, imply the existence of IMs, readers are referred to \cite{GG, Kost-Zelik, Zelik14}.  Also, the value of $\alpha$ in the strong cone property (\ref{strong-cone}) coincides with the value of $\alpha$ in the exponential tracking property (\ref{tracking}). Consequently, we have the following corollary regarding the existence of IMs for the abstract model (\ref{a1}). 
\end{remark}

\begin{corollary}
\label{cor1}
Consider that the conditions outlined in Lemma \ref{thm1} are fulfilled, and assume $\|F(u)\|\leq C$ for all $u\in H$.
Under these premises, it follows that problem (\ref{a1}) admits an $N$-dimensional inertial manifold $\mathcal{M}$, as defined in Definition \ref{defman}.
\end{corollary}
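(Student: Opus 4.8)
The plan is to deduce the corollary from the abstract inertial-manifold machinery organized around the strong cone property, as developed in \cite{Zelik14, Kost-Zelik, GG}. Lemma \ref{thm1} supplies precisely the hypothesis required: for a suitable large $N$ (one for which the spatial averaging condition (\ref{averaging}) holds), the difference $v=u_1-u_2$ of any two solutions of (\ref{a1}) obeys the strong cone property (\ref{a18}), namely $\frac{d}{dt}V(t)+2\alpha V(t)\leq -\frac{1}{8}\lambda_N^{\beta-1}\|v\|^2$ with $V=\|q\|^2-\|p\|^2$, $p=P_Nv$, $q=Q_Nv$. Combined with the standing assumptions that $F$ is globally Lipschitz (constant $L$) and bounded ($\|F(u)\|\leq C$), this places (\ref{a1}) inside the framework of those references, so the bulk of the argument is a verification that the abstract theorem applies rather than a fresh construction.

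First I would extract the geometric content of (\ref{a18}). The set $\mathcal{K}=\{v\in H: V=\|Q_Nv\|^2-\|P_Nv\|^2\leq 0\}$ is a cone around $P_NH$, and the strict negativity of the right-hand side of (\ref{a18}) on $\{V=0\}$ shows that $\mathcal{K}$ is strongly forward invariant for the difference of any two solutions: if $V(t_0)\leq 0$ then $V(t)\leq 0$ for all $t\geq t_0$. Moreover, for any \emph{complete} trajectory $v(\cdot)$ of the difference equation that stays bounded on all of $\mathbb{R}$, integrating (\ref{a18}) backward excludes $V>0$ at any time: setting $W(t)=V(t)e^{2\alpha t}$ gives $W'\leq 0$, so if $V(t_1)>0$ then $V(t)\geq V(t_1)e^{2\alpha(t_1-t)}$ grows without bound as $t\to-\infty$, contradicting $V\leq\|v\|^2<\infty$. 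Hence $V(t)\leq 0$ for every complete bounded trajectory.

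Second, I would construct $\mathcal{M}$ as a Lipschitz graph by the Lyapunov--Perron method. For each $p_0\in P_NH$ one seeks the unique solution of (\ref{a1}) that exists and stays bounded for all $t\leq 0$ with $P_Nu(0)=p_0$; the boundedness of $F$, the Lipschitz bound $L$, and the dissipation gap between $\lambda_N^{\beta}$ and $\lambda_{N+1}^{\beta}$ turn the associated integral operator into a contraction on a suitable exponentially weighted space, yielding a unique fixed point. Setting $\Phi(p_0)=Q_Nu(0)$ defines $\mathcal{M}=\{p+\Phi(p): p\in P_NH\}$, which is invariant by uniqueness. Since any two points of $\mathcal{M}$ are endpoints of complete bounded trajectories, their difference lies in $\mathcal{K}$ by the previous paragraph, i.e. $\|\Phi(p_1)-\Phi(p_2)\|\leq\|p_1-p_2\|$, so $\Phi$ is Lipschitz with constant one and $\mathcal{M}$ is an $N$-dimensional Lipschitz manifold in the sense of Definition \ref{defman}.

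Finally, the exponential tracking property (\ref{tracking}) is where the real work lies. The plan is to use the strong cone property as a \emph{squeezing} estimate: given an arbitrary $u_0\in H$, one produces $v_0\in\mathcal{M}$ whose trajectory tracks $S(t)u_0$ via a cone-based connecting argument, after which feeding $V\leq 0$ back into (\ref{a18}) forces $\|v(t)\|^2$ to decay and yields $\|S(t)u_0-S(t)v_0\|\leq Ce^{-\alpha t}\|u_0-v_0\|$ with the same exponential rate appearing in (\ref{strong-cone}). I expect this asymptotic completeness step to be the main obstacle; it is precisely the part for which the detailed constructions of \cite{GG, Kost-Zelik, Zelik14} are invoked, the cone invariance and the Lipschitz graph being comparatively routine once (\ref{a18}) is in hand.
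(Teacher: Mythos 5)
Your overall route is the same one the paper takes: the paper offers no self-contained proof of Corollary \ref{cor1}, and the remark preceding it simply records that the strong cone property (\ref{a18}), combined with $\|F(u)\|\leq C$, yields the inertial manifold, deferring all machinery to \cite{GG, Kost-Zelik, Zelik14}. Your first and third observations are correct and standard: the differential inequality (\ref{a18}) gives forward invariance of the cone $\{V\leq 0\}$, the backward-in-time argument excludes $V>0$ along complete bounded trajectories, and this forces $\|\Phi(p_1)-\Phi(p_2)\|\leq \|p_1-p_2\|$ once the graph exists.

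The genuine gap is in your second step. You claim that ``the dissipation gap between $\lambda_N^{\beta}$ and $\lambda_{N+1}^{\beta}$'' makes the Lyapunov--Perron integral operator a contraction on an exponentially weighted space. That is precisely the classical spectral gap condition recalled in the paper's introduction: since the nonlinearity is $A^{1/2}F(u)$, the contraction constant is of order $L\bigl(\lambda_N^{1/2}+\lambda_{N+1}^{1/2}\bigr)/\bigl(\lambda_{N+1}^{\beta}-\lambda_N^{\beta}\bigr)$, so one needs $\lambda_{N+1}^{\beta}-\lambda_N^{\beta}\gg L\,\lambda_N^{1/2}$. But the hypotheses of Lemma \ref{thm1} only guarantee $\lambda_{N+1}-\lambda_N\geq 1$, so $\lambda_{N+1}^{\beta}-\lambda_N^{\beta}$ may be as small as $O(\lambda_N^{\beta-1})$ with $\beta-1<\tfrac{1}{2}$, which is negligible against $L\,\lambda_N^{1/2}$; and in the intended application (eigenvalues that are sums of two squares, $\beta<\tfrac{3}{2}$) no usable gap exists --- that absence is the entire reason this paper develops the spatial averaging condition (\ref{averaging}). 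Consequently the existence of a backward-bounded solution through every $p_0\in P_NH$, i.e.\ the fact that $\Phi$ is defined on all of $P_NH$, cannot be obtained by a contraction argument. In \cite{Zelik14, Kost-Zelik, GG} it is obtained by a different mechanism: one solves boundary value problems on finite intervals $[-T,0]$ (equivalently, iterates a graph transform) and passes to the limit $T\to\infty$ by compactness, with the cone property (\ref{a18}) supplying uniqueness of the limit and the Lipschitz estimate. So the existence of the graph, which you label ``comparatively routine,'' is in fact one of the two places where the cited machinery is indispensable, alongside the exponential tracking step that you correctly defer to the references.
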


\vspace{0.1 in}

\subsection{Sparse distribution of lattice points in annuli}

To validate the spatial averaging condition (\ref{averaging}) for the HNSE, we investigate a particular property concerning the sparse distribution of lattice points within annular regions in $\mathbb{R}^2$. This exploration is motivated by and builds upon number-theoretical insights presented in \cite{Mallet-Sell}.

\begin{lemma}  \label{thm2D}
Assume $0<s<\frac{1}{6}$. There exist arbitrarily large $\lambda$ and $k \geq C \lambda^s$ for some constant $C$ independent of $\lambda$ and $k$, such that,  
any two lattice points $n,\ell \in \mathbb Z^2$ that belong to the annulus $\{x\in \mathbb R^2: \lambda - k \leq |x|^2 \leq \lambda + k\}$ must satisfy $|n-\ell|> \lambda^{s/2}$.
\end{lemma}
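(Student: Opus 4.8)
The plan is to produce good values of $\lambda$ by a first-moment (union bound) argument over a dyadic window $\lambda\in[M,2M]$, and then let $M\to\infty$ to obtain arbitrarily large $\lambda$. Fix $M$ large, set $k=C\lambda^{s}$ (so $k\asymp M^{s}$ throughout the window), and call an ordered pair of distinct lattice points $(n,\ell)\in(\mathbb{Z}^{2})^{2}$ \emph{bad at $\lambda$} if both $n,\ell$ lie in the annulus $\{\,\lambda-k\le|x|^{2}\le\lambda+k\,\}$ while $|n-\ell|\le\lambda^{s/2}$. The lemma is exactly the statement that some $\lambda\in[M,2M]$ admits no bad pair, and I will establish this by showing that the set of $\lambda$ for which a bad pair exists has Lebesgue measure strictly less than $M$, the length of the window.

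First I record that a single pair can be bad only on a short set of $\lambda$. If $n,\ell$ both lie in the annulus then $\bigl|\,|n|^{2}-|\ell|^{2}\,\bigr|\le 2k$, and the set of admissible $\lambda$ is contained in $\{\lambda:\bigl|\lambda-|n|^{2}\bigr|\le C\lambda^{s}\}$, whose measure on the window is $\lesssim k\asymp M^{s}$. Hence, by the union bound, the measure of the bad $\lambda$ in $[M,2M]$ is $\lesssim k$ times the number of pairs that are bad for \emph{some} $\lambda$ in the window.

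Next I count those pairs. Writing $m=(m_{1},m_{2})=n-\ell$, a bad pair has $0<|m|\le(2M)^{s/2}=:T$, giving $\lesssim T^{2}\asymp M^{s}$ choices of $m$. For fixed $m$, both points lying in the annulus forces the integer $t:=|n|^{2}-|\ell|^{2}=2\,\ell\cdot m+|m|^{2}$ to satisfy $|t|\le 2k$, i.e.\ $\lesssim k$ admissible values of $t$. For each such $t$ the point $\ell$ is confined to the line $\{x:2\,x\cdot m=t-|m|^{2}\}$, whose lattice points are spaced at distance $|m|/\gcd(m_{1},m_{2})\ge 1$; since $|\ell|\lesssim\sqrt{M}$, at most $\lesssim\sqrt{M}$ of them are relevant. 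Multiplying the three counts, the number of pairs bad for some $\lambda\in[M,2M]$ is $\lesssim M^{s}\cdot k\cdot\sqrt{M}\asymp M^{1/2+2s}$, so the bad set has measure $\lesssim k\cdot M^{1/2+2s}\asymp M^{1/2+3s}$. The hypothesis $s<\tfrac16$ gives $\tfrac12+3s<1$, so this measure is less than $M$ for all large $M$; thus a good $\lambda\in[M,2M]$ exists, and $M\to\infty$ yields arbitrarily large such $\lambda$ with the stated $k\ge C\lambda^{s}$.

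The delicate point — and the reason the threshold comes out at exactly $\tfrac16$ rather than something larger — is the lattice-point count on the lines $\{2\,x\cdot m=t-|m|^{2}\}$. Bounding the spacing crudely by $1$ (instead of by $|m|/\gcd(m_{1},m_{2})$, which would win back a factor of $|m|$ on average) is what produces the exponent $\tfrac12+3s$, and one must verify that this crude bound survives the summations over the $\asymp M^{s}$ difference vectors $m$ and the $\asymp k$ offsets $t$ without an extra polynomial or logarithmic loss that would destroy the comparison with the window length $M$. Managing the interplay of these three counts against the factor $k$ from the per-pair interval length is the heart of the estimate; the rest is bookkeeping of boundary effects and the bounded variation of $k=C\lambda^{s}$ across $[M,2M]$.
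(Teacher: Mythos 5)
Your proposal is correct, and it reaches the same threshold $s<\frac16$ as the paper, but by a genuinely different route. The paper fixes a large $\mu$, tiles the annulus $\{\mu<|x|^2\le\mu+(J+1)\kappa\}$ by $J+1\approx\mu^{1/2}$ disjoint sub-annuli of width $\kappa=\mu^s$, observes that any close pair in one sub-annulus lies in a strip $\{x:|x\cdot j|<\mu^s\}$ with $0<|j|\le\mu^{s/2}$, bounds the \emph{area} of the union of strips intersected with the big annulus by $C\mu^{3s}$, converts area to a lattice-point count via an asymptotic $\mathrm{card}\sim\mathrm{meas}$, and pigeonholes: $\mu^{3s}<\mu^{1/2}$ forces some sub-annulus to be free of close pairs. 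You instead run a first-moment argument over a \emph{continuous} dyadic window $\lambda\in[M,2M]$: each bad pair excludes a $\lambda$-interval of length $\lesssim M^s$, and you count bad pairs arithmetically by the difference vector $m$ ($\lesssim M^s$ choices), the integer offset $t=|n|^2-|\ell|^2$ ($\lesssim M^s$ choices), and the $\lesssim\sqrt M$ lattice points on the resulting line $\{2x\cdot m=t-|m|^2\}$, whose consecutive lattice points are spaced at distance $|m|/\gcd(m_1,m_2)\ge1$; the union bound then gives a bad set of measure $\lesssim M^{1/2+3s}<M$. The two arguments share the same key observation (a close pair in a thin annulus forces $|x\cdot m|$ to be small, i.e.\ confinement to few lines/strips) and the same exponent bookkeeping ($3s<\tfrac12$), but your version has a concrete advantage: by counting lattice points on lines directly you avoid the paper's step equating cardinality with Lebesgue measure for thin sets, which is the most delicate point of the paper's proof (it is stated as an asymptotic and delegated to the reference \cite{GI}). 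What the paper's discrete pigeonhole buys in exchange is a much tighter localization of the good $\lambda$ (inside $[\mu,\mu+\mu^{1/2+s}]$ rather than a dyadic window), and a strip-geometry picture that matches the higher-dimensional spatial-averaging literature; neither is needed for the lemma as stated.
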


\vspace{0.05 in}

\begin{remark}
The annuli described in Lemma \ref{thm2D} may contain no lattice points, a single lattice point, or multiple lattice points. In cases where multiple lattice points exist within such an annulus, each pair of points is separated by a sufficiently large distance.
\end{remark}

\vspace{0.05 in}

\begin{proof}
We draw ideas from \cite{Mallet-Sell, GI} and begin by introducing a notation. For functions $f(x)$ and $g(x)$, we denote
$f(x) \sim g(x)$ to mean $\lim_{x\rightarrow \infty} \frac{f(x)}{g(x)} = 1$. 

Consider a family of disjoint annuli in $\mathbb R^2$ defined as: 
\begin{align*}
N_m^{\mu} = \{x\in \mathbb R^2:  \mu+ m \kappa < |x|^2 \leq \mu + (m+1) \kappa\},
\end{align*}
where $m \in \mathbb Z$ in the range $0\leq m \leq  J = \lfloor \mu^{1/2} \rfloor $,
and we set
$$\kappa = \mu^s, \;\;\text{where}\;\;  0<s<\frac{1}{6}.$$

Our goal is to show that, for sufficiently large $\mu$, there exists $m\in [0,J]$ such that $N_m^{\mu}$ does not contain any pair of lattice points at a distance less than or equal to $\mu^{s/2}$.

The union of these annuli $N_m^{\mu}$ is denoted as
\begin{align*}
N^{\mu} = \bigcup_{m=0}^{J} N_m^{\mu} =
\{x \in \mathbb R^2: \;  \mu < |x|^2 \leq \mu + (J+1)\kappa\}.
\end{align*}
Evidently, $N^{\mu}$ forms an annulus in $\mathbb R^2$.

The thickness of $N^{\mu}$ is given by $\sqrt{\mu + (J+1)\kappa} - \sqrt{\mu}$. 
With $J=  \lfloor \mu^{1/2} \rfloor $ and $\kappa = \mu^s$ for $0<s<\frac{1}{6}$, a straightforward calculation shows that, as $\mu\rightarrow \infty$,
\begin{align} \label{thickN}
\text{the thickness of} \,N^{\mu} \sim \frac{1}{2} \mu^s, \;\text{namely},\; \lim_{\mu\rightarrow \infty} \frac{\text{thickness of} \,N^{\mu} }{\frac{1}{2}\mu^s} = 1.
\end{align}

Let $\ell, n  \in \mathbb Z^2$ be two distinct lattice points within the same annulus $N^{\mu}_m$:
\begin{align*}
\ell, n  \in N^{\mu}_m  \;\;\text{such that} \;\; 0<| \ell - n| \leq  \mu^{s/2},
\end{align*}
for some $m \in [0,J]$. Denoting $j= \ell-n$, we have $0<|j| \leq  \mu^{s/2} $. Since $|\ell|^2 = |n|^2 + 2 n \cdot j + |j|^2$, it follows that
\begin{align*}
|n \cdot j|\leq \frac{1}{2} \left| |n|^2 - |\ell|^2 \right| + \frac{1}{2} |j|^2 < \frac{1}{2} \kappa + \frac{1}{2} \mu^s = \mu^s,
\end{align*}
as $\ell,  n  \in N^{\mu}_m$. Since $\ell$ and $n$ are interchangeable, we have also
$|\ell \cdot j| < \mu^s$. Therefore, the lattice points $n$ and $\ell$ belong to a strip 
\begin{align} \label{strip}
S_j^{\mu} = \{x \in \mathbb R^2: |x \cdot j|< \mu^s\}
\end{align}
for some $j\in \mathbb Z^2$ satisfying $0<|j| \leq \mu^{s/2}$. The strip $S_j$ is symmetric about the line $x \cdot j =0$ in $\mathbb R^2$.

We denote $S^{\mu}$ as the finite union $S^{\mu} = \bigcup_{0<|j| \leq \mu^{s/2}} S^{\mu}_j$.
Note that the set $S^{\mu}$ contains all pairs of lattice points $\ell, n \in \mathbb Z^2$ at a distance less than or equal to $\mu^{s/2}$ belonging to an annulus $N^{\mu}_m$ for some integer $m \in [0,J]$.
In other words,
\begin{align} \label{Smusup}
S^{\mu} \supset \left\{  \ell, n \in \mathbb Z^2:  \,  0< |\ell -n  | \leq  \mu^{s/2} \;\text{with} \; \ell, n \in N^{\mu}_m \; \text{for some integer} \; m \in [0,J] \right\}.
\end{align}

From (\ref{strip}), we note that
\begin{align} \label{widS}
\text{the width of} \; S^{\mu}_j \leq 2\mu^s. 
\end{align}

Also, as $\mu \rightarrow \infty$, asymptotically,
\begin{align} \label{measSN}
\text{meas}(S_j^{\mu} \cap N^{\mu}) \sim 2 (\text{width of $S^{\mu}_j$})(\text{thickness of $N^{\mu}$}).
\end{align}
Here, ``meas" stands for the Lebesgue measure of a set in $\mathbb R^2$. 

Combining (\ref{thickN}), (\ref{widS}) and (\ref{measSN}), it follows that, for sufficiently large $\mu$, 
\begin{align} \label{measSN2}
\text{meas}(S_j^{\mu} \cap N^{\mu})  \leq c \mu^{2s},
\end{align}
for some constant $c$.

Given that $S^{\mu} = \cup_{0<|j| \leq \mu^{s/2}} S^{\mu}_j$ is a finite union, and the count of $j\in \mathbb Z^2$ satisfying $|j| \leq \mu^{s/2}$ is asymptotically of the order $\mu^s$, 
it follows that, for sufficiently large $\mu$, according to inequality (\ref{measSN2}), we have
 \begin{align}  \label{measSN3}
\text{meas}(S^{\mu} \cap N^{\mu}) \leq  C\mu^{3s}.
\end{align}

Note, as $\mu \rightarrow \infty$, we have
\begin{align} \label{cardSN}
\text{card}(S^{\mu} \cap N^{\mu} \cap \mathbb Z^2) \sim \text{meas} (S^{\mu}\cap N^{\mu}).
\end{align}
Here, ``card" represents the cardinality of lattice points.

By (\ref{measSN3}) and (\ref{cardSN}), for sufficiently large $\mu$, 
\begin{align} \label{cont}
\text{card}(S^{\mu} \cap N^{\mu} \cap \mathbb Z^2)  \leq C\mu^{3s}.
\end{align}

If every disjoint sets $S^{\mu} \cap N^{\mu}_m \cap \mathbb Z^2$ were non-empty for all $m \in [0,J]$, with $J=\lfloor \mu^{1/2} \rfloor$,
then $\text{card}(S^{\mu} \cap N^{\mu} \cap \mathbb Z^2)$ would grow at least as fast as $\mu^{1/2}$ as $\mu\rightarrow \infty$, contradicting (\ref{cont})
since $0<s<1/6$. Consequently, for sufficiently large $\mu$, there exists $m_0\in [0,J]$ such that the set $S^{\mu} \cap N^{\mu}_{m_0} \cap \mathbb Z^2$ is empty. 
Therefore, by (\ref{Smusup}), the annulus $N^{\mu}_{m_0}$ does not contain two lattice points at a distance less than or equal to $\mu^{s/2}$.
Setting $\lambda = \mu + (m_0 + \frac{1}{2})\kappa$, we have
\begin{align} \label{Nm0}
N_{m_0}^{\mu} &= \{x\in \mathbb R^2:  \mu+ m_0 \kappa < |x|^2 \leq \mu + (m_0+1) \kappa\} \notag\\
&= \{x\in \mathbb R^2:  \lambda - \frac{\kappa}{2} < |x|^2 \leq \lambda +\frac{\kappa}{2}\}.
\end{align}
Observe that as $\mu \rightarrow \infty$, the ratio $\frac{\lambda}{\mu}$ approaches 1, and since $\kappa = \mu^s$, it follows that $\lim_{\lambda \rightarrow \infty} \frac{\lambda^s}{\kappa}=1$, where $0<s<\frac{1}{6}$. Hence, for sufficiently large values of $\lambda$, it holds that $\kappa \geq \frac{1}{2} \lambda^s$. Also, the half-open annulus defined in (\ref{Nm0}) can be easily adjusted to a closed annulus.

\end{proof}

\vspace{0.1 in}

\subsection{Modification of the HNSE outside the absorbing ball}  \label{subsec-modi}
To study the HNSE (\ref{HNSEb}), we define the phase space $$\mathbb H=\{u\in (L^{2}(\mathbb{T}^{2}))^{2}:\int_{\mathbb{T}^{2}}u\,dx=0,\; \nabla \cdot u=0\}.$$
In this subsection, the norm $\|\cdot\|_{\mathbb H}$ is denoted simply as $\|\cdot\|$.

Note that for $u\in (L^{2}(\mathbb{T}^{2}))^{2}$, it can be expressed as $u=\sum_{j\in 
\mathbb{Z}^{2}}\hat{u}_{j}e^{ij\cdot x}$ with $\hat{u}_{j}$ being the Fourier
coefficients. Thus, $\int_{\mathbb{T}^{2}}udx=0$ is equivalent to $\hat{u}_{0}=0$. Therefore, if $u\in \mathbb H$, then $u=\sum_{j\in \mathbb{Z}^{2}\backslash \{0\}}\hat{u}_{j}e^{ij\cdot x}$.
The Helmholtz-Leray orthogonal projection operator is denoted as $P_{\sigma }:(L^{2}(\mathbb{T}^{2}))^{2}\rightarrow \mathbb H$ and the Stokes operator as $A=-P_{\sigma }\Delta$. In the periodic space, it is known that $Au=-P_{\sigma }\Delta u=-\Delta u$ for all $u\in \mathcal{D}(A)$.
The operator $A^{-1}$ is a self-adjoint, positive-definite, compact operator mapping from $\mathbb H$ to $\mathbb H$. The Sobolev space is defined as $H^{s}(\mathbb T^2)=\left\{ u\in \mathbb H:\|u\|_{H^{s}}^{2}=\sum_{j\in \mathbb{Z}^{2}\backslash \{0\}} |j|^{2s}|\hat{u}_{j}|^{2}<\infty \right\}$, for any $s\in \mathbb R$.

For $u, v\in H^1(\mathbb T^2)$, the bilinear form $B(u,v)=P_{\sigma }((u\cdot \nabla )v)$ is defined. 
Thus, the HNSE (\ref{HNSEb}) can be equivalently written in $\mathbb H$ as
\begin{equation} \label{BNSE}
\partial_t u+\nu A^{\beta}u+B(u,u)=f,    \;\; \text{for} \;\;\beta>\frac{17}{12}.
\end{equation}

\vspace{0.1 in}

\begin{proposition}
\label{gl_atr}
Assume $f\in H^{\frac{1}{6}}(\mathbb T^2)$. Let $S\left( t\right) : \mathbb H \rightarrow \mathbb H$ be the solution semigroup generated by the HNSE (\ref{BNSE}) with $\beta > \frac{17}{12}$. Then, $S(t)$ possesses an absorbing ball in $H^{3+\epsilon}(\mathbb T^2)$ of radius $\rho$, such that $\|S(t) u_0\|_{H^{3+\epsilon}} \leq \rho$ for $t\geq t_1(\|u_0\|)$, where $\epsilon= 2\beta-\frac{17}{6}>0$.
\end{proposition}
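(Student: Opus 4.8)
The plan is to reach the $H^{3+\epsilon}$ absorbing ball by the standard energy-estimate bootstrap, exploiting the strong parabolic smoothing of $\nu A^{\beta}$ together with the uniform Gronwall lemma. I would first record the basic energy balance: pairing (\ref{BNSE}) with $u$ and using the cancellation $(B(u,u),u)=0$, which holds for divergence-free periodic fields, gives
\begin{equation*}
\frac{1}{2}\frac{d}{dt}\|u\|^{2}+\nu\|A^{\beta/2}u\|^{2}=(f,u).
\end{equation*}
Since $\|u\|^{2}\leq \lambda_{1}^{-\beta}\|A^{\beta/2}u\|^{2}$ and $f\in H^{1/6}\subset \mathbb{H}$, Young's inequality absorbs $(f,u)$ into the dissipation and yields an absorbing ball in $\mathbb{H}=L^{2}$, together with the time-averaged estimate $\int_{t}^{t+1}\|A^{\beta/2}u(\tau)\|^{2}\,d\tau\leq R_{0}$ for all large $t$, with $R_{0}$ depending only on $\|f\|$ and $\nu$. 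This serves as the base of the induction.

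Before climbing in regularity I would remove the stationary part of the forcing. Because $f$ is time-independent and $f\in H^{1/6}$, the function $w=\nu^{-1}A^{-\beta}f$ belongs to $H^{1/6+2\beta}=H^{3+\epsilon}$, and the substitution $\tilde u=u-w$ turns (\ref{BNSE}) into the \emph{unforced} equation
\begin{equation*}
\partial_{t}\tilde u+\nu A^{\beta}\tilde u+B(u,u)=0,
\end{equation*}
in which $w$ now enters only through the quadratic term $B(u,u)=B(\tilde u+w,\tilde u+w)$ and is a fixed element of $H^{3+\epsilon}$. This is exactly the step that reconciles the target space $H^{3+\epsilon}=H^{2\beta+1/6}$ with the weaker hypothesis $f\in H^{1/6}$: the $2\beta$ derivatives gained by inverting $A^{\beta}$ carry $H^{1/6}$ precisely into $H^{3+\epsilon}$, so it suffices to prove that $\tilde u$ also enters an absorbing ball in $H^{3+\epsilon}$.

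I would then climb the Sobolev ladder for $\tilde u$ in finitely many steps. Assuming a pointwise absorbing ball in $H^{r}$ and a time-integrated bound $\int_{t}^{t+1}\|\tilde u\|_{H^{r+\beta}}^{2}\,d\tau\leq R_{r}$ are already available, I would pair the $\tilde u$-equation with $A^{r+\beta}\tilde u$ to get
\begin{equation*}
\frac{1}{2}\frac{d}{dt}\|\tilde u\|_{H^{r+\beta}}^{2}+\nu\|\tilde u\|_{H^{r+2\beta}}^{2}=-\bigl(A^{r/2}B(u,u),A^{(r+2\beta)/2}\tilde u\bigr),
\end{equation*}
and control the right-hand side through a two-dimensional product estimate of the schematic form $\|B(u,u)\|_{H^{\sigma}}\leq C\|u\|_{H^{\sigma_{1}}}\|u\|_{H^{\sigma_{2}+1}}$, valid for $\sigma_{1},\sigma_{2}\geq\sigma$ with $\sigma_{1}+\sigma_{2}>\sigma+1$. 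Young's inequality then buries the top-order contribution in $\nu\|\tilde u\|_{H^{r+2\beta}}^{2}$, while the residual $\|B(u,u)\|_{H^{r}}^{2}$ is estimated through the product inequality by the pointwise $H^{r}$ control, the time-integrated $H^{r+\beta}$ control, and the fixed quantity $\|w\|_{H^{3+\epsilon}}$, producing terms integrable in time. The uniform Gronwall lemma, fed by $R_{r}$ and the pointwise ball at level $r$, upgrades this to a pointwise absorbing ball in $H^{r+\beta}$ and supplies the time-integrated bound at the next rung (the intermediate integrated bounds following by embedding from the previous dissipation level). Iterating finitely many times, with the final increment chosen so that the tested level equals $3+\epsilon$, and adding back $w\in H^{3+\epsilon}$, produces the asserted ball of radius $\rho$, entered after a time $t_{1}(\|u_{0}\|)$.

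The main obstacle is the nonlinear estimate. Since in two dimensions $B(u,u)=(u\cdot\nabla)u$ costs a full derivative, the admissible product exponents are tight, and the delicate rungs are the lowest ones, where $\|B(u,u)\|_{L^{2}}$ must be handled by the sharp Ladyzhenskaya/Agmon-type interpolation inequalities and shown to be time-integrable against the available $H^{\beta}$ and $H^{2\beta}$ control. The hypothesis $\beta>\frac{17}{12}$ (equivalently $\epsilon=2\beta-\frac{17}{6}>0$) guarantees both that the target $H^{3+\epsilon}\hookrightarrow C^{2}(\mathbb{T}^{2})$ lies strictly above $H^{3}$ and that the gain of $2\beta$ derivatives per step dominates the single derivative lost to the nonlinearity, so that every rung closes with a constant independent of time. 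Once the top rung is reached, the Sobolev embedding makes $u$ a classical solution and no further obstruction arises.
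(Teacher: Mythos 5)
Your proposal is correct and follows essentially the same route the paper itself points to: the paper omits the proof of Proposition \ref{gl_atr} entirely, deferring to the classical energy-estimate/uniform-Gronwall bootstrap in Robinson and Temam and remarking only that the condition $\frac{1}{6}+2\beta>3$ ties the forcing regularity to the hyperviscosity. Your subtraction of the stationary lift $w=\nu^{-1}A^{-\beta}f\in H^{\frac16+2\beta}=H^{3+\epsilon}$ is precisely the rigorous form of that remark (it explains why the absorbing ball sits exactly at the maximal regularity $H^{2\beta+\frac16}=H^{3+\epsilon}$ permitted by $f\in H^{1/6}$), and the Sobolev ladder you describe — with the lowest rungs closed by Ladyzhenskaya/Agmon interpolation and the differential form $y'\leq gy+h$ of the uniform Gronwall lemma — supplies the details the paper leaves to the literature.
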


\vspace{0.1 in}

\begin{remark}
The existence of an absorbing ball for the 2D NSE is well-established in the literature, with detailed proofs available in texts like \cite{Robinson, Temam}. For equation (\ref{BNSE}), which includes hyper-viscosity, the proof adheres to the classical approach and is omitted here for brevity. Please note the relationship between the regularity of the forcing term $f\in H^{\frac{1}{6}}(\mathbb T^2)$ and the viscous term $\nu A^{\beta}u$, illustrated by the condition $\frac{1}{6}+2\beta >3$. 
This ensures that all trajectories of the dynamics enter an absorbing ball in $H^{3+\epsilon}$ at large time.
\end{remark}

\vspace{0.1 in}

Inertial manifolds are fundamentally concerned with the dissipative dynamics as $t\rightarrow \infty$. Consequently, we can adapt the PDE outside the absorbing ball by truncating the nonlinearity, as explained in Remark \ref{remk-pre}.

To truncate the nonlinearity, we introduce a smooth cut-off function, denoted $\theta$, belonging to $C_{0}^{\infty }(\mathbb{C})$. This function satisfies $\theta (\xi )=\xi$ when $|\xi |\leq 1$, and maintains $|\theta (\xi )|\leq 2$ for all $\xi \in \mathbb{C}$. Further, we define a vector-valued cut-off function $\vec{\theta}(\xi)=(\theta (\xi _{1}),\theta (\xi _{2}))$ for any $\xi =(\xi _{1},\xi _{2})$ in $\mathbb{C}^{2}$.

Recall that the Helmholtz-Leray orthogonal projector, denoted $P_{\sigma }$, from $(L^{2}(\mathbb{T}^{2}))^{2}$ to $\mathbb H$ is defined as
$P_{\sigma }u=\sum_{j\in \mathbb{Z}^{2}\backslash \{0\}}P_{j}\hat{u}_{j}e^{ij\cdot x}$, where $P_{j}$ are $2 \times 2$ matrices given by
$P_{j}=\frac{1}{|j|^{2}}\left( 
\begin{array}{ccc}
j_{2}^{2} & -j_{1}j_{2} \\ 
-j_{1}j_{2} & j_{1}^{2}
\end{array}
\right)$.

Let $\rho$ represent the radius of the absorbing ball in $H^{3+\epsilon}$ for the HNSE (\ref{BNSE}), as indicated in Proposition \ref{gl_atr}.
In line with Kostianko's approach \cite{Kostianko}, we define the operator $W: \mathbb H\rightarrow \mathbb H$ as
\begin{equation}
W(u)=\sum_{j\in \mathbb{Z}^{2}\backslash \{0\}}\frac{\rho }{|j|^{3+\epsilon}}P_{j} \vec{\theta}\left( \frac{|j|^{3+\epsilon}\hat{u}_{j}}{\rho }\right) e^{ij\cdot x}.
\label{defU}
\end{equation}
This operator is crucial for modifying the nonlinearity $B(u,u)$ in (\ref{BNSE}) outside the absorbing ball in $H^{3+\epsilon}$. Specifically, we replace $B(u,u)$ with $B(W(u),W(u))$, aiming to establish the existence of IMs in $\mathbb H$ for the ``prepared" equation of (\ref{BNSE}), which is
\begin{equation}
\partial_t u+ \nu A^{\beta}u+B(W(u),W(u))=f,  \;\; \text{for} \;\;\beta>\frac{17}{12}. \label{prepared}
\end{equation}
It is important to note that the following lemma ensures that both the original (\ref{BNSE}) and the \textquotedblleft prepared" (\ref{prepared}) equations exhibit identical large-time behaviors within the absorbing ball in $H^{3+\epsilon}(\mathbb{T}^{2})$.

\begin{lemma}
\label{lem1}

With $\epsilon= 2\beta-\frac{17}{6}>0$, the function $W: \mathbb H\rightarrow \mathbb H$ possesses these properties:
\begin{enumerate}
\item $W(u)=u$ provided $\|u\|_{H^{3+\epsilon}} \leq \rho$.

\item 

$W$ acts as a regularization operator, mapping $\mathbb H$ to $H^2$, and there exists a constant $C$ such that
\begin{equation}
\|W(u)\|_{H^{2}} \leq C,\text{\ \ for all\ \ }u\in \mathbb H.  \label{g17}
\end{equation}
Moreover, the map $W: \mathbb H\rightarrow H^2$ is continuous.

\item $W$ is Gateaux differentiable from $\mathbb H$ to $\mathbb H$. Its derivative $W^{\prime }$ is expressed as
\begin{equation}
W^{\prime }(u)v=\sum_{j\in \mathbb{Z}^{2}\backslash \{0\}}P_{j}\vec{\theta}
^{\prime }\left( \frac{|j|^{3+\epsilon}\hat{u}_{j}}{\rho }\right) \hat{v}
_{j}e^{ij\cdot x},\text{\ \ for\ \ }u,v\in \mathbb H.  \label{Gat}
\end{equation}
Furthermore, there exists a constant $L_{1}>0$ such that $\|W^{\prime }(u)\|_{\mathcal{L}(\mathbb H, \mathbb H)}\leq L_{1}$ for all $u\in \mathbb H$.
The map $u\mapsto W^{\prime }(u)v$ is continuous from $\mathbb H$ to $\mathbb H$ for each $v\in \mathbb H$.
\end{enumerate}
\end{lemma}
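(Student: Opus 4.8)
The plan is to verify the three properties mode by mode, exploiting the fact that $W$ is a (nonlinear) Fourier multiplier and that the weight $|j|^{-(3+\epsilon)}$ becomes square-summable against $|j|^{4}$ precisely because $\epsilon>0$. \emph{For Property (1),} I would observe that $\|u\|_{H^{3+\epsilon}}\le\rho$ forces $|j|^{3+\epsilon}|\hat u_j|\le\rho$ for every $j\in\mathbb Z^2\setminus\{0\}$, so both components of $\frac{|j|^{3+\epsilon}\hat u_j}{\rho}$ have modulus at most one; since $\theta(\xi)=\xi$ on $|\xi|\le1$, the cut-off $\vec\theta$ acts as the identity on this argument. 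As $u$ is divergence-free, $j\cdot\hat u_j=0$, hence $P_j\hat u_j=\hat u_j$, and the prefactors $\frac{\rho}{|j|^{3+\epsilon}}$ and $\frac{|j|^{3+\epsilon}}{\rho}$ cancel, giving $W(u)=u$.

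\emph{For Property (2),} using $\|P_j\|\le1$ and $|\vec\theta(\cdot)|\le 2\sqrt2$ (each component of $\vec\theta$ is bounded by $2$), the $j$-th Fourier coefficient of $W(u)$ is dominated by $2\sqrt2\,\rho\,|j|^{-(3+\epsilon)}$, whence
\[
\|W(u)\|_{H^2}^2\le 8\rho^2\sum_{j\in\mathbb Z^2\setminus\{0\}}|j|^{4-2(3+\epsilon)}=8\rho^2\sum_{j\neq0}|j|^{-2-2\epsilon},
\]
and this series converges exactly because $\epsilon=2\beta-\frac{17}{6}>0$; this is the sole place where the admissible range of $\beta$ enters. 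For continuity $\mathbb H\to H^2$, the naive Lipschitz estimate only yields $H^2\to H^2$ continuity, so I would split the $H^2$-difference into a finite block $|j|\le R$ and a tail $|j|>R$. The tail is bounded uniformly in $u$ by a fixed constant times $\sum_{|j|>R}|j|^{-2-2\epsilon}$, which is small for $R$ large; on the finite block the coefficient map $\hat u_j\mapsto\frac{\rho}{|j|^{3+\epsilon}}P_j\vec\theta(\frac{|j|^{3+\epsilon}\hat u_j}{\rho})$ is globally Lipschitz, the $|j|^4$-weight is at most $R^4$, and Fourier coefficients converge under $\mathbb H$-convergence, so the block contribution is controlled by $\|u-u'\|$.

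\emph{For Property (3),} I would differentiate $t\mapsto W(u+tv)$ coefficientwise: the difference quotient of the $j$-th mode converges to $P_j\vec\theta'(\frac{|j|^{3+\epsilon}\hat u_j}{\rho})\hat v_j$ by smoothness of $\theta$, and the mean value theorem bounds it uniformly in $t$ by $L_0|\hat v_j|$ with $L_0=\|\vec\theta'\|_{\infty}$; since $\sum_j|\hat v_j|^2<\infty$, dominated convergence delivers the limit in $\mathbb H$ together with formula (\ref{Gat}). The same pointwise bound gives $\|W'(u)v\|^2\le L_0^2\sum_j|\hat v_j|^2=L_0^2\|v\|^2$, so one may take $L_1=L_0$ and $\|W'(u)\|_{\mathcal L(\mathbb H,\mathbb H)}\le L_0$. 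Continuity of $u\mapsto W'(u)v$ then follows by the same finite-block/tail splitting as in (2), the tail now controlled uniformly by $L_0^2\sum_{|j|>R}|\hat v_j|^2$.

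I expect the continuity assertions---the $\mathbb H\to H^2$ continuity in (2) and the continuity of $u\mapsto W'(u)v$ in (3)---to be the main obstacle, since a single global Lipschitz bound is only valid in the stronger ($H^2$, respectively $\mathbb H$) topology; the resolution is to localize to finitely many Fourier modes, where coefficientwise convergence suffices, and to absorb the remaining modes into a tail made uniformly small by the summability that $\epsilon>0$ provides. One technical point I would flag is that $\theta\in C_0^\infty(\mathbb C)$ cannot be holomorphic, so $\vec\theta'$ must be read as the real Jacobian and the Gateaux derivative is taken along real $t$; the estimates above hold verbatim with $L_0$ the supremum over $\xi$ of the operator norm of this real Jacobian, which is finite because $\theta$ has compact support.
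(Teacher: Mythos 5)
Your proposal is correct, and for the parts the paper actually writes out it follows the same route: property (1) via the coefficientwise bound $|j|^{3+\epsilon}|\hat u_j|\le \rho$ and the identity region of $\theta$, and the bound (\ref{g17}) via $|\vec\theta|\le 2\sqrt2$, $\|P_j\|\le 1$, and convergence of $\sum_{j\ne 0}|j|^{-2-2\epsilon}$, which is exactly where $\epsilon=2\beta-\frac{17}{6}>0$ enters. The difference is one of completeness: the paper proves only these two items and defers everything else --- the $\mathbb H\to H^2$ continuity of $W$, the Gateaux differentiability with formula (\ref{Gat}), the uniform bound $L_1$, and the continuity of $u\mapsto W^{\prime}(u)v$ --- to Lemma 3.4 of \cite{GG}, whereas you supply self-contained arguments: a finite-block/tail splitting (the tail made uniformly small by the same $\epsilon>0$ summability, the block handled by the uniform-in-$j$ Lipschitz bound on the coefficient maps together with coefficientwise convergence under $\mathbb H$-convergence), and a dominated-convergence argument in $\ell^2$ for the difference quotients. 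These are the standard devices and match in spirit what is done in the cited reference, so nothing is lost; your version buys independence from \cite{GG} at the cost of length. Two details in your write-up are worth keeping: the explicit observation that $P_j\hat u_j=\hat u_j$ for divergence-free $u$, which is genuinely needed to conclude $W(u)=u$ in (1) and is left implicit in the paper's proof, and the remark that $\vec\theta^{\prime}$ must be read as the real Jacobian of the compactly supported (hence non-holomorphic) cut-off, with the Gateaux derivative taken along real increments --- precisely the interpretation under which (\ref{Gat}) and the bound $\|W^{\prime}(u)\|_{\mathcal L(\mathbb H,\mathbb H)}\le L_1$ hold.
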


\begin{proof}

This proof closely mirrors that of Lemma 3.4 in \cite{GG}. Also see \cite{Kostianko}. To show that $W(u)=u$ when $\|u\|_{H^{3+\epsilon}} \leq \rho$, we consider 
$\|u\|_{H^{3+\epsilon}}^{2}=\sum_{j\in \mathbb{Z}^{2}\backslash \{0\}}|j|^{6+2\epsilon}|\hat{u}_{j}|^{2}\leq \rho ^{2}$. This implies $\frac{|j|^{3+\epsilon}|\hat{u}_{j}|}{\rho }\leq 1$ for all $j\in \mathbb{Z}^2\backslash \{0\}$. Since $\theta(\xi)=\xi$ for $|\xi|\leq 1$, it follows that $\vec{\theta}\left( \frac{|j|^{3+\epsilon}\hat{u}_{j}}{\rho }\right) =\frac{|j|^{3+\epsilon}\hat{u}_{j}}{\rho }$ for all $j\in \mathbb{Z}^{2}\backslash \{0\}$. Therefore, from (\ref{defU}), we conclude that $W(u)=u$ if $\|u\|_{H^{3+\epsilon}}\leq \rho$.

To establish that $\|W(u)\|_{H^{2}} \leq C$ for any $u\in \mathbb H$, we use the property $|\theta (\xi )|\leq 2$ for all $\xi \in \mathbb{C}$ along with (\ref{defU}). This yields the inequality
\begin{equation*}
\|W(u)\|_{H^2}^{2} \leq C\sum_{j\in \mathbb{Z}
^2 \backslash \{0\}}|j|^{4}\frac{\rho ^{2}}{|j|^{6+2\epsilon}}\leq C\rho
^{2}\sum_{j\in \mathbb{Z}^{2}\backslash \{0\}}\frac{1}{|j|^{2+2\epsilon}}\leq C(\epsilon,\rho).
\end{equation*}

For the proofs of the remaining parts, please refer to Lemma 3.4 in \cite{GG}.
\end{proof}

\vspace{0.1 in}

Without loss of generality, let us assume the viscosity $\nu=1$. The ``prepared" HNSE (\ref{prepared}) can be expressed in the form of the abstract equation (\ref{a1}) as:
\begin{equation}   \label{n1''}
\partial_t u+A^{\beta}u+A^{1/2}\mathscr F(u)=f,  \;\; \text{where}\;\;  \mathscr F(u)= A^{-1/2}B(W(u),W(u)).
\end{equation}

Consider vectors $u=(u_{1},u_{2})$, $v=(v_{1},v_{2})$, and $w=(w_{1},w_{2})$ in $\mathbb H$. We define the trilinear form as follows:
\begin{equation}
b(u,v,w)=(B(u,v),w)=
\sum_{m,n=1}^{2}\int_{\mathbb{T}^{2}} \big(u_{m}\frac{\partial v_{n}}{\partial x_m}\big)w_{n}dx,  \label{n10}
\end{equation}
applicable whenever the integrals in (\ref{n10}) are well-defined.

The following proposition has been proved in \cite{GG}. Please see Proposition 3.5 in \cite{GG}.

\begin{proposition}
\label{prop4} 
Let $\mathscr F$ be the operator as defined in (\ref{n1''}). Then $\mathscr F$ is uniformly bounded from $\mathbb H$ to $H^2$, i.e,
there exists a constant $C>0$ such that $\|\mathscr F(u)\|_{H^2}\leq C$ for all $u\in \mathbb H$.
Moreover, $\mathscr F$ is Gateaux differentiable from $\mathbb H$ to $\mathbb H$, with its
derivative $\mathscr F^{\prime }$ expressed as
\begin{equation}
(\mathscr F^{\prime }(u)v,w)=-b(W(u),A^{-1/2}w,W^{\prime }(u)v)-b(W^{\prime}(u)v,A^{-1/2}w,W(u)),  \label{g4}
\end{equation}
for $u$, $v$, $w\in \mathbb H$. In addition, there exists a constant $L>0$ such that 
$\|\mathscr F^{\prime }(u)\|_{\mathcal{L}(\mathbb H,\mathbb H)}\leq L$ for all $u\in \mathbb H$.
\end{proposition}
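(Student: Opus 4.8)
The plan is to treat the three assertions in turn, exploiting two structural facts: in two dimensions $H^2(\mathbb T^2)$ is a Banach algebra continuously embedded in $L^\infty$, and the operator $A^{-1/2}$ smooths by one derivative, mapping $H^s$ into $H^{s+1}$. I would first establish the uniform $H^2$ bound. Since $\mathscr F(u)=A^{-1/2}B(W(u),W(u))$ and $A^{-1/2}$ gains one derivative, it suffices to bound $B(W(u),W(u))=P_\sigma((W(u)\cdot\nabla)W(u))$ in $H^1$. By Lemma \ref{lem1}, $\|W(u)\|_{H^2}\le C$ uniformly in $u\in\mathbb H$. Using the 2D product estimate $\|fg\|_{H^1}\le C\|f\|_{H^2}\|g\|_{H^1}$ (a consequence of $H^2\hookrightarrow L^\infty$) together with the boundedness of the Leray projector on $H^1$, I obtain $\|B(W(u),W(u))\|_{H^1}\le C\|W(u)\|_{H^2}^2\le C$, whence $\|\mathscr F(u)\|_{H^2}\le C$.

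Next I would compute the Gateaux derivative. Differentiating $t\mapsto\mathscr F(u+tv)$ at $t=0$, using the bilinearity of $B$ and the Gateaux differentiability of $W$ from Lemma \ref{lem1}, yields formally
\[
\mathscr F'(u)v=A^{-1/2}\big[B(W'(u)v,W(u))+B(W(u),W'(u)v)\big].
\]
Pairing with $w\in\mathbb H$, moving the self-adjoint $A^{-1/2}$ onto $w$, and invoking the antisymmetry $b(a,\phi,\psi)=-b(a,\psi,\phi)$, valid whenever the first argument $a$ is divergence-free (here $a=W(u)$ or $a=W'(u)v$, both lying in $\mathbb H$ by Lemma \ref{lem1} and hence divergence-free), recovers exactly the identity (\ref{g4}). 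To make this rigorous I would form the difference quotient, split $B(W(u+tv),W(u+tv))-B(W(u),W(u))$ via bilinearity into two terms involving $W(u+tv)-W(u)$, divide by $t$, and pass to the limit using that $\tfrac{1}{t}(W(u+tv)-W(u))\to W'(u)v$ in $\mathbb H$ and $W(u+tv)\to W(u)$ in $H^2$, both continuity statements furnished by Lemma \ref{lem1}.

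Finally, for the uniform bound on $\mathscr F'$ I would estimate the two trilinear terms in (\ref{g4}) directly from the definition (\ref{n10}). In $b(W(u),A^{-1/2}w,W'(u)v)$ I place $W(u)$ in $L^\infty$ (controlled by $\|W(u)\|_{H^2}\le C$), the gradient $\nabla A^{-1/2}w$ in $L^2$ (controlled by $\|w\|$ since $A^{-1/2}$ smooths by one derivative), and $W'(u)v$ in $L^2$ (controlled by $L_1\|v\|$ via Lemma \ref{lem1}); the same Hölder splitting $L^\infty\cdot L^2\cdot L^2$ handles the second term $b(W'(u)v,A^{-1/2}w,W(u))$. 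This gives $|(\mathscr F'(u)v,w)|\le C\|v\|\|w\|$ for every $w\in\mathbb H$, and therefore $\|\mathscr F'(u)\|_{\mathcal L(\mathbb H,\mathbb H)}\le L$ uniformly in $u$.

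The main obstacle is the rigorous justification of the Gateaux differentiability, namely passing to the limit in the difference quotient within the $\mathbb H$ topology, since it is the composition with the nonlinear regularizer $W$ that must be differentiated. This step relies essentially on the continuity of $W:\mathbb H\to H^2$ and of $u\mapsto W'(u)v$ established in Lemma \ref{lem1}, which provide precisely the control needed on the product terms. By contrast, the two boundedness estimates are routine once the 2D embedding $H^2\hookrightarrow L^\infty$ and the one-derivative smoothing of $A^{-1/2}$ are exploited. I note that this argument mirrors the proof of Proposition 3.5 in \cite{GG}, to which the details may be referred.
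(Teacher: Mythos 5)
Your proof is correct and supplies exactly what is needed. Note that the paper itself gives no proof of Proposition \ref{prop4} at all --- it simply defers to Proposition 3.5 of \cite{GG} --- and your argument (the 2D product estimate via $H^2\hookrightarrow L^\infty$, the one-derivative gain of $A^{-1/2}$, the antisymmetry of $b$ in its last two arguments for divergence-free first argument, and the $L^\infty\cdot L^2\cdot L^2$ H\"older splitting for the uniform bound on $\mathscr F'$) is precisely the standard route that the cited proof follows, so there is nothing to flag beyond minor technical care in interpreting $B(W(u),W'(u)v)$ distributionally before pairing with $w$, which your weak formulation already handles.
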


\vspace{0.1 in}

\subsection{Verification of the spatial averaging condition}

We now turn our attention to verifying the spatial averaging condition (\ref{averaging}) from Lemma \ref{thm1}, for the ``prepared" HNSE (\ref{prepared}). This verification will enable us to apply Corollary \ref{cor1} and consequently assert the existence of an IM for the ``prepared" HNSE. We consider the supercritical case $\frac{17}{12} < \beta < \frac{3}{2}$. In $\mathbb H$, the operator $A=-\Delta$ has eigenfunctions $\{e^{ij\cdot x}\}$ for all $j=(j_1,j_2) \in \mathbb Z^2 \backslash \{0\}$, 
with eigenvalues $\{\lambda_n\} = \{j_1^2 + j_2^2\}$. The eigenvalues $\{\lambda_n\}$ are sums of two squares, and we sort them as $0\leq \lambda_1 \leq \lambda_2 \leq \cdots$,  repeated according to their multiplicities.

\begin{proposition}
\label{prop1}  
For any $s \in (3-2\beta, \frac{1}{6})$ with $\frac{17}{12} < \beta < \frac{3}{2}$, and given the operator $\mathscr F$ as defined in (\ref{n1''}), there exist arbitrarily large $\lambda_N$ and $k \geq c\lambda_N^s$ such that
\begin{equation*}
\|\mathcal I_{k,N}\mathscr F^{\prime }(u)\mathcal I_{k,N}\|_{\mathcal L(\mathbb H, \mathbb H)} \leq   \frac{1}{16} \lambda_N^{ -   \frac{1}{2}(3-2\beta)} ,
\end{equation*}
for any $u \in \mathbb H$.
\end{proposition}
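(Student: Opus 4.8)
The plan is to bound the operator norm by testing against the bilinear pairing and exploiting the Fourier-diagonal structure of $W'(u)$ and $A^{-1/2}$ together with the lattice-point separation of Lemma \ref{thm2D}. Writing $\tilde v = \mathcal I_{k,N}v$ and $\tilde w = \mathcal I_{k,N}w$, we have $\|\mathcal I_{k,N}\mathscr F'(u)\mathcal I_{k,N}\|_{\mathcal L(\mathbb H,\mathbb H)} = \sup\{(\mathscr F'(u)\tilde v,\tilde w): \|v\|,\|w\|\le 1\}$, so by (\ref{g4}) the quantity to control is $-b(W(u), A^{-1/2}\tilde w, W'(u)\tilde v) - b(W'(u)\tilde v, A^{-1/2}\tilde w, W(u))$. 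The key observation is that both $A^{-1/2}$ and $W'(u)$ (see (\ref{Gat})) act as multipliers on each Fourier mode separately, so $A^{-1/2}\tilde w$ and $W'(u)\tilde v$ remain supported on the annular band $\{j\in\mathbb Z^2\setminus\{0\}: \lambda_N - k \le |j|^2 \le \lambda_N + k\}$. Thus in each trilinear term only $W(u)$ carries frequencies off the band.

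Next I would isolate the surviving frequencies. In $b(a,b,c)=\sum_{m,n}\int a_m(\partial_{x_m}b_n)c_n\,dx$ only triples of frequencies with $p+q+r=0$ contribute; in both terms the two band-supported factors $A^{-1/2}\tilde w$ and $W'(u)\tilde v$ contribute frequencies $q,r$ lying in the annulus, while the third frequency $p$ belongs to $W(u)$. Since the annulus is symmetric under $x\mapsto -x$, the point $-r$ also lies in it, and $p = -(q+r) = q-(-r)$ is the difference of two lattice points of the annulus. Here I invoke Lemma \ref{thm2D}: choosing $\lambda_N$ to be an eigenvalue inside (a slightly enlarged copy of) the separated annulus and $k\ge c\lambda_N^s$ its half-width, any two distinct lattice points of the band differ in norm by more than $\lambda_N^{s/2}$. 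Consequently either $q=-r$, forcing $p=0$ — which contributes nothing because $W(u)\in\mathbb H$ has vanishing zeroth mode — or $|p|>\lambda_N^{s/2}$. In other words, only the high-frequency part $g^{>}$ of $g=W(u)$, supported on $|p|>\lambda_N^{s/2}$, enters the pairing.

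It then remains a routine estimate. Replacing $W(u)$ by $g^{>}$ and applying Hölder's inequality in the $L^\infty\cdot L^2\cdot L^2$ pattern, the gradient falling on the middle argument $A^{-1/2}\tilde w$ produces $\|\nabla A^{-1/2}\tilde w\|_{L^2}\approx\|\tilde w\|$, so the $A^{1/2}$ from the derivative exactly cancels the $A^{-1/2}$; the remaining band-supported factor is controlled by $\|W'(u)\tilde v\|\le L_1\|\tilde v\|$. For the $L^\infty$ factor I would use the tail bound $\|g^{>}\|_{L^\infty}\le\sum_{|p|>\lambda_N^{s/2}}|\hat g_p|\le\big(\sum_{|p|>\lambda_N^{s/2}}|p|^{-4}\big)^{1/2}\|W(u)\|_{H^2}\lesssim\lambda_N^{-s/2}$, where the bound $\|W(u)\|_{H^2}\le C$ is exactly the regularization gain recorded in Lemma \ref{lem1} and Proposition \ref{prop4}. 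Both trilinear terms obey the same bound, giving $|(\mathscr F'(u)\tilde v,\tilde w)|\lesssim\lambda_N^{-s/2}\|v\|\|w\|$, hence $\|\mathcal I_{k,N}\mathscr F'(u)\mathcal I_{k,N}\|_{\mathcal L(\mathbb H,\mathbb H)}\lesssim\lambda_N^{-s/2}$. Because $s>3-2\beta$, the exponent satisfies $-\tfrac{s}{2}<-\tfrac12(3-2\beta)$, so for all sufficiently large $\lambda_N$ the implied constant is absorbed and the bound $\tfrac{1}{16}\lambda_N^{-\frac12(3-2\beta)}$ follows.

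The crux of the argument — and the main obstacle — is the frequency-localization step: translating the geometric separation of Lemma \ref{thm2D} into the dichotomy that the interaction frequency $p$ of $W(u)$ is either zero or larger than $\lambda_N^{s/2}$, and then cashing this in against the $H^2$-smoothness of $W(u)$. A secondary technical point is the clean matching of the eigenvalue $\lambda_N$ and the half-width $k$ to the annulus produced by Lemma \ref{thm2D}, so that the full intermediate band, not merely its center, lies in a separated annulus; this is arranged by applying Lemma \ref{thm2D} to a slightly widened annulus and absorbing the resulting constants into $c$. Everything else is the exact cancellation of the derivative by $A^{-1/2}$ and a standard Hölder/tail estimate.
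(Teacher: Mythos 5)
Your proposal is correct and takes essentially the same route as the paper: both reduce to the trilinear representation (\ref{g4}), use the Fourier-diagonal structure of $W'(u)$ and $A^{-1/2}$ together with Lemma \ref{thm2D} to conclude that only frequencies $|p|>\lambda_N^{s/2}$ of $W(u)$ can interact with the intermediate band, and then cash in the $H^2$-bound on $W(u)$ for a factor $\lambda_N^{-s/2}$, concluding via $s>3-2\beta$. The only differences are cosmetic: the paper expresses the frequency localization as the vanishing of $\mathcal I_{k,N}(\varphi_{<r}\,\mathcal I_{k,N}\psi)$ through an explicit convolution computation and uses Agmon's inequality for the $L^\infty$ tail bound, whereas you use the frequency-sum-zero argument and a direct Cauchy--Schwarz $\ell^1$ estimate (and your ``$p=q-(-r)$'' should read $-p=q-(-r)$, which is immaterial since only $|p|$ matters).
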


\begin{proof}
We denote the $L^p(\mathbb T^2)$-norm as $\|\cdot \|_p$, where $2\leq p\leq \infty$. Consider vectors $u=(u_{1},u_{2})$, $v=(v_{1},v_{2})$, and $w=(w_{1},w_{2})$ in $\mathbb H$. Using Proposition \ref{prop4}, we express:
\begin{align}
& (\mathcal I_{k,N}\mathscr F^{\prime }(u)\mathcal I_{k,N}v, \,w)
=(\mathscr F^{\prime}(u)\mathcal I_{k,N}v, \,\mathcal I_{k,N}w)  \label{n8}  \notag \\
& =-b(W(u), \, A^{-1/2}\mathcal I_{k,N}w, \,W^{\prime }(u)\mathcal I_{k,N}v)
-b(W^{\prime}(u)\mathcal I_{k,N}v, \, A^{-1/2}\mathcal I_{k,N}w, \, W(u)). 
\end{align}

We denote the $n$-th component of the vector $W(u)$ as $W(u)_{n}$ and the $n$-th component of the vector $W^{\prime }(u)v$ as $[W^{\prime }(u)v]_{n}$, where $n=1,2$. By (\ref{n10}), we obtain
\begin{align*}
& b(W(u), \, A^{-1/2} \mathcal I_{k,N}w, \, W^{\prime }(u)\mathcal I_{k,N}v) 
= \sum_{m,n=1}^{2}\int_{\mathbb{T}^2} W(u)_m \frac{\partial(A^{-\frac{1}{2}} \mathcal I_{k,N}w_n  )}{\partial x_m} [W^{\prime }(u)\mathcal I_{k,N}v]_n dx
\notag\\
&=\sum_{m,n=1}^{2}\int_{\mathbb{T}^2}\left[\mathcal I_{k,N}\left(W(u)_{m}\mathcal I_{k,N}[W^{\prime }(u)v]_{n}\right) \right]  \frac{\partial(A^{-\frac{1}{2}}w_n  )}{\partial x_m}dx.
\end{align*}
Applying the Cauchy-Schwarz inequality, we deduce that
\begin{equation*}
\left|b(W(u), \, A^{-1/2}\mathcal I_{k,N}w, \, W^{\prime }(u)\mathcal I_{k,N}v)\right| \leq
\|w\|_2 \sum_{m,n=1}^{2}\| \mathcal I_{k,N}\left( W(u)_{m}\mathcal I_{k,N}[U^{\prime
}(u)v]_{n}\right)\|_2.
\end{equation*}
The same estimate holds for the second term on the right-hand side of (\ref%
{n8}). Therefore, 
\begin{equation}
(\mathcal I_{k,N}\mathscr F^{\prime }(u)\mathcal I_{k,N}v, \,w)\leq
2 \|w\|_2 \sum_{m,n=1}^{2}\| \mathcal I_{k,N}\left( W(u)_{m}\mathcal I_{k,N}[W^{\prime
}(u)v]_{n}\right) \|_2, \label{n8'}
\end{equation}
for any $u,v,w\in \mathbb H$.

We aim to estimate the right-hand side of inequality (\ref{n8'}). 
Consider $\varphi $, $\psi \in L^{2}(\mathbb{T}^2)$, both having a zero mean value.
For $r>0$, we decompose $\varphi$ into high and low modes: $\varphi =\varphi _{>r}+\varphi _{<r}$ where
$\varphi _{>r} =\sum_{|j|>r}\hat{\varphi}_{j}e^{ij\cdot x}$
and $\varphi _{<r} =\sum_{1\leq |j|\leq r}\hat{\varphi}_{j}e^{ij\cdot x}$.
Then
\begin{equation}
\mathcal I_{k,N}(\varphi \mathcal I_{k,N}\psi )=\mathcal I_{k,N}(\varphi _{>r}\mathcal I_{k,N}\psi
)+\mathcal I_{k,N}(\varphi _{<r}\mathcal I_{k,N}\psi ).  \label{n4}
\end{equation}

By Lemma \ref{thm2D}, for any $s\in (0,\frac{1}{6})$, there exist arbitrarily large $\lambda >0$ and $k \geq C\lambda^s$ satisfying:
whenever $|n|^{2},|l|^{2}\in \lbrack \lambda - k ,\lambda + k]$ with
distinct $n$ and $l\in \mathbb{Z}^{2}$, it holds that $|n-l|>  \lambda^{s/2}$. Therefore, we can select arbitrarily large $\lambda
_{N}>0$ and $k \geq c\lambda_{N}^s$ satisfying: whenever 
$|n|^{2},|l|^{2}\in \lbrack \lambda _{N}-k,\lambda_{N}+k]$ with distinct $n$
and $l\in \mathbb{Z}^{2}$, we have $|n-l|> r= \lambda^{s/2}_N$. This leads to  
\begin{equation}
\mathcal I_{k,N}(\varphi _{<r}\mathcal I_{k,N}\psi )=\sum_{\lambda _{N}-k\leq |n|^{2}\leq
\lambda _{N}+k}\Big( \sum_{\substack{ \lambda _{N}-k\leq |l|^{2}\leq
\lambda _{N}+k  \\ 1\leq |n-l|\leq r}}\hat{\varphi}_{n-l}\hat{\psi}
_{l}\Big) e^{in\cdot x}=0.  \label{n5}
\end{equation}
From (\ref{n4}) and (\ref{n5}), for the chosen $N$ and $k$, it follows that
\begin{equation*}
\mathcal I_{k,N}(\varphi \mathcal I_{k,N}\psi )=\mathcal I_{k,N}(\varphi _{>r}\mathcal I_{k,N}\psi ).
\end{equation*}
Applying Agmon's inequality, and assuming $\varphi \in H^2(\mathbb T^2)$ with zero mean, we deduce
\begin{equation}
\|\mathcal I_{k,N}(\varphi \mathcal I_{k,N}\psi )\|_2
\leq \|\varphi _{>r}\mathcal I_{k,N}\psi\|_2
\leq \|\varphi_{>r}\|_{\infty} \|\psi\|_2
\leq C\|\varphi _{>r}\|_2^{\frac{1}{2}} \|\varphi\|_{H^2}^{\frac{1}{2}} \|\psi\|_2.  \label{n6}
\end{equation}
Note that
\begin{equation}
\|\varphi _{>r}\|^{2}_2 =\sum_{|j|>r}|\hat{\varphi}_{j}|^{2}=\sum_{|j|>r}\frac{1}{|j|^{4}}|j|^{4}|\hat{\varphi}_{j}|^{2}
\leq \frac{1}{r^{4}}\|\varphi\|_{H^2}^{2}.  \label{n7}
\end{equation}
Thus, combining (\ref{n6}) and (\ref{n7}), we arrive at
\begin{equation}
\|\mathcal I_{k,N}(\varphi \mathcal I_{k,N}\psi )\|_2
\leq \frac{C}{r}\|\varphi\|_{H^2} \|\psi\|_2.  \label{n9}
\end{equation}

Referring to (\ref{n8'}) and (\ref{n9}), we conclude that
\begin{equation}
(\mathcal I_{k,N}\mathscr F^{\prime }(u)\mathcal I_{k,N}v,w)\leq \frac{C\|w\|_2}{r} \sum_{m,n=1}^{2}\|W(u)_{m}\|_{H^2} \|[W^{\prime }(u)v]_{n}\|_2.  \label{n18}
\end{equation}
Given Lemma \ref{lem1}, which states $\|W(u)\|_{H^{2}}\leq C$ for all $u\in \mathbb H$ and $\|W^{\prime }(u)v\|_2 \leq L_{1}\|v\|_2$, we infer from (\ref{n18}) that 
\begin{equation*}
(\mathcal I_{k,N}\mathscr F^{\prime }(u)\mathcal I_{k,N}v,w)\leq \frac{CL_{1}}{r}\|v\|_2  \|w\|_2,
\text{\ \ for all\ \ }u, v, w \in \mathbb H.
\end{equation*}
Since $r= \lambda^{s/2}_N$, it follows that
\begin{equation*}
\|\mathcal I_{k,N}\mathscr F^{\prime }(u)\mathcal I_{k,N}v\|_2
\leq \frac{CL_{1}}{r}\|v\|_2\leq
CL_{1} \lambda_N^{-s/2}\|v\|_2
\leq  \frac{1}{16} \lambda_N^{ -   \frac{1}{2}(3-2\beta)} \|v\|_2, \text{\ \ for all\ \ }u,v\in \mathbb H,
\end{equation*}
for sufficiently large $\lambda_N$, given that $s > 3-2\beta$.
\end{proof}

\vspace{0.1 in}

\section{Discussion}

In this section, we explore the underlying motivation of our research. The paper \cite{GG} established the existence of IMs for the HNSE when the exponent $\beta \geq \frac{3}{2}$ in both 2D and 3D periodic domains. However, a lower value for $\beta$ is intuitively anticipated in 2D compared to 3D, for the IM problem. As an illustration, for the HNSE, global regularity and global attractors are available when $\beta \geq 1$ in 2D, as opposed to $\beta \geq \frac{5}{4}$ in 3D. Our work addresses this conjecture by successfully showing that the value of $\beta$ can be reduced below the critical threshold of $\frac{3}{2}$ for the IM problem in 2D. Looking ahead, our future objectives include exploring the potential for further reducing the value of $\beta$. In fact, gaining a deeper understanding of the distribution of lattice points in annular regions may prove beneficial. The investigation of sparse distributions of lattice points in annuli or on circles also presents interesting problems in number theory in their own right.

The concept of IM is designed to provide a robust analytical framework for understanding the finite-dimensionality characteristic of the asymptotic dynamics in dissipative PDEs. The dimension of an IM is defined in the traditional, topological sense as applied to manifolds. A significant gap in our understanding, however, is the uncertainty surrounding the existence of an IM for the NSE. In contrast, it is established that the 2D NSE possesses a finite-dimensional global attractor. The dimensions of this attractor are quantified using either fractal or Hausdorff dimensions. The dimension of the attractor serves as a valuable indicator of the degrees of freedom involved in the system's asymptotic dynamics. Nevertheless, it is important to recognize that the structure of a finite-dimensional attractor can be exceedingly complex. Therefore, both in theoretical and practical contexts, the dimension of the IM is often regarded as the ``true" dimension of the long-term dynamics of a dissipative PDE.

Also, it is worth mentioning that the discovery of a finite number of determining modes and nodes in the 2D NSE suggests a limited number of degrees of freedom for the asymptotic dynamics of turbulence. Foias and Frodi \cite{FP} originally introduced the concept of ``determining modes", which states that if two solutions of the NSE converge as $t\rightarrow \infty$ in the projection onto the first $N$ Fourier modes, they will also converge in their entirety. Analogously, ``determining nodes" refer to a finite set of points within the domain that can be used in place of Fourier modes for this convergence. However, it is important to emphasize that the existence of determining modes does not necessarily mean that these lower modes determine the asymptotic solution of the NSE, thus leaving the question of the finite-dimensionality of NSE asymptotic dynamics open for further investigation.

In the field of analyzing the NSE and Euler equations, there have been significant advancements concerning the non-uniqueness of weak solutions \cite{DS, BV, ABC}, the phenomenon of anomalous dissipation and Onsager's conjecture \cite{Isett, BD}, and the formation of singularities \cite{Elgindi, CH}. Equally crucial is the finite-dimensional nature of the asymptotic dynamics of the NSE, which is important for deepening our comprehension of fully developed turbulence. This work contributes to this critical area of study. Generally, regarding evolutionary PDEs with inherent regularization mechanisms, a ``simpler" dynamical behavior is often expected in the long term. For instance, the well-known soliton resolution conjecture suggests that for many nonlinear dispersive PDEs, solutions with generic initial conditions should ultimately decompose into a limited number of solitons, each moving at distinct velocities, accompanied by a radiative term that diminishes over time, thereby illustrating the finite-dimensional asymptotic dynamics characteristic of nonlinear dispersive PDEs.

\vspace{0.1 in}

\end{document}